\newtheorem{thm}{Theorem}[section]
\newtheorem*{thm*}{Theorem}
\newtheorem{cor}[thm]{Corollary}
\newtheorem{lem}[thm]{Lemma}
\newtheorem{prop}[thm]{Proposition}
\newtheorem{obs}[thm]{Example}
\theoremstyle{definition}
\newtheorem{rem}[thm]{Remark}
\theoremstyle{definition}
\theoremstyle{definition}
\numberwithin{equation}{section}
\newcommand{\la}{\langle}
\newcommand{\ra}{\rangle}
\newcommand{\N}{\mathbb{N}}
\newcommand{\Z}{\mathbb{Z}}
\newcommand{\norm}[1]{\left\lVert#1\right\rVert}
\newcommand{\abs}[1]{|#1|}
\newcommand{\mch}{\mathcal{H}}
\newcommand{\mcb}{\mathcal{B}}
\def\({\left(}
\def\){\right)}
\title{The Dual Kaczmarz Algorithm}
\author[A. Aboud]{Anna Aboud}
\author[E. Curl]{Emelie Curl}
\author[S. N. Harding]{Steven N. Harding}
\author[M. Vaughan]{Mary Vaughan}
\author[E. S. Weber]{Eric S. Weber}
\address{Department of Mathematics\\
Iowa State University\\
396 Carver Hall, Ames\\
IA 50011, United States of America}
\email[A. Aboud]{acseitz@iastate.edu}
\email[E. Curl]{ecurl@iastate.edu}
\email[S. N. Harding]{sharding@iastate.edu} 
\email[M. Vaughan]{maryo@iastate.edu}
\email[E. S. Weber]{esweber@iastate.edu}
\keywords{Kaczmarz algorithm, effective sequence, frame, Gram matrix, Hilbert space}
\subjclass[2010]{Primary: 41A65, 65D15. Secondary: 42C15, 65F10}
\begin{document}
\maketitle

\begin{abstract} 
The Kaczmarz algorithm is an iterative method for solving a system of linear equations.  It can be extended so as to reconstruct a vector $x$ in a (separable) Hilbert space from the inner-products $\{\langle x, \phi_{n} \rangle\}$. The Kaczmarz algorithms defines a sequence of approximations from the sequence $\{\langle x, \phi_{n} \rangle\}$; these approximations only converge to $x$ when $\{\phi_{n}\}$ is \emph{effective}. We dualize the Kaczmarz algorithm so that $x$ can be obtained from $\{\langle x, \phi_{n} \rangle\}$ by using a second sequence $\{\psi_{n}\}$ in the reconstruction.  This allows for the recovery of $x$ even when the sequence $\{\phi_{n}\}$ is not effective; in particular, our dualization yields a reconstruction when the sequence $\{\phi_{n}\}$ is \emph{almost effective}.  
We also obtain some partial results characterizing when the sequence of approximations from $\{\langle x, \phi_{n} \rangle\}$ using $\{\psi_{n}\}$ converges to $x$, in which case $\{(\phi_n, \psi_n)\}$ is called an \emph{effective pair}.   
  
\end{abstract}

\section{Introduction}\label{sintro}  

A question of consistent interest in recent years is the conditions under which one can reconstruct a vector in a Hilbert space $\mch$ given its inner products with some sequence of vectors $\{e_n\}_{n=0}^{\infty}$. Sequences of vectors $\{e_n\}_{n=0}^{\infty}$ which yield non-uniqueness of representations and robustness to perturbations have motivated advances in the area of frame theory \cite{Caz-Chr,Chr1,Chr2}. There is now renewed interest in iterative reconstructions, e.g.~phase retrieval \cite{Jeong,Tan}, optimization \cite{N-all}, learning theory \cite{Kwa-Myc}, and computerized tomography \cite{Nbook}, to name a few. 
All of these areas make use of the reconstruction method which we now introduce.

In 1937, Stefan Kaczmarz introduced an iterative process of solving linear systems which we now call the \textit{Kaczmarz algorithm}. Given a linearly dense sequence of unit vectors $\{e_n\}_{n=0}^{\infty}$ in $\mch$ and $x \in \mch$, we define a sequence of approximations $\{x_n\}_{n=0}^{\infty}$ by
\begin{equation}\label{defn:ka}
\begin{aligned}
		x_0 &= \la x, e_0 \ra e_0, \\
		x_n &= x_{n-1} + \la x- x_{n-1}, e_n \ra e_n, \quad n \geq 1.
\end{aligned}
\end{equation}
We say that $\{e_n\}_{n=0}^{\infty}$ is \textit{effective} if $\norm{x_n - x} \to 0$ for every $x \in \mathcal{H}$. Kaczmarz showed in \cite{Kpaper} that any periodic, linearly dense sequence of unit vectors $\{e_n\}_{n = 1}^{\infty}$ in a finite-dimensional Hilbert space is effective.

In \cite{Kwa-Myc}, Kwapie\'{n} and Mycielski made progress in the infinite-dimensional Hilbert space setting by utilizing the auxiliary sequence $\{h_n\}_{n=0}^{\infty}$ defined recursively as 
\begin{equation}\label{defn:aux}
\begin{aligned}
h_0 &= e_0, \\
h_n &= e_n - \sum_{k=0}^{n-1} \la e_n, e_k \ra h_k, \quad n \geq 1.
\end{aligned}
\end{equation}
They showed that $\{e_n\}_{n=0}^{\infty}$ is effective if and only if $\{h_n\}_{n=0}^{\infty}$ is a Parseval frame, namely that, for every $x \in \mathcal{H}$,
\[\|x\|^2 = \sum_{n = 0}^{\infty}|\langle x,h_n\rangle|^2.\]
In \cite{Hall-Szw}, Haller and Szwarc approached the characterization from a different perspective, using the matrix of inner products
\begin{equation}\label{I+M}
I+M = 
\begin{pmatrix}
1 					& 0 				& 0 				& \cdots \\
\la e_1, e_0 \ra 	& 1					&0					& \cdots \\
\la e_2, e_0 \ra 	& \la e_2, e_1 \ra	&1					& \cdots \\
\la e_3, e_0 \ra 	& \la e_3, e_1 \ra	&\la e_3, e_2 \ra 	& \cdots \\
\vdots				& \vdots			& \vdots			& \ddots \\
\end{pmatrix}.
\end{equation}
They showed that $\{e_n\}_{n=0}^{\infty}$ is effective if and only if the matrix $U$ is a partial isometry where $I + U$ is the algebraic inverse of $I + M$.

In the more general setting of a Banach space $X$, Kwapie\'{n} and Mycielski in \cite{Kwa-Myc} reinterpreted the Kaczmarz algorithm as the iterative process associated to $\{(e_n,f_n)\}_{n=0}^{\infty} \subset X \times X^*$ with ${f_n(e_n) = 1}$ and
	\begin{align*}
		x_0 &= 0, \\
		x_n &= x_{n-1} + f_n(x - x_{n-1}) e_n, \quad n \geq 1.
	\end{align*}
In this context, the sequence $\{(e_n,f_n)\}_{n=0}^{\infty}$ is said to be effective if $\norm{x - x_n}_{X} \rightarrow 0$ for every $x \in X$. 
Although Kwapie\'n and Mycielski did not produce a characterization of effective sequences in Banach spaces, they did attain approximation results by thresholding the linear functionals.

With this extension to Banach spaces, we find applications of the Kaczmarz algorithm in learning theory. In \cite{Kwa-Myc}, Kwapie\'{n} and Mycielski describe a model concerning the space 
\[X = \{x \in C(\mathbb{R}) : x(t + T) = x(t)\}.\]
They ask when there exists some $h \in X$ with $h(0) = 1$ and some sequence $\{t_n\}_{n = 0}^{\infty}$ such that every $x \in X$ can be recognized (uniformly approximated) from the sequence $\{x_n\}_{n=0}^{\infty}$ generated by the learning process
\begin{equation}\label{defn:ka in Banach}
\begin{aligned}
x_0(t) &= 0, \\
x_n(t) &= x_{n-1}(t) + (x(t_n) - x_{n-1}(t_n))h(t - t_n), \quad n \geq 1.
\end{aligned}
\end{equation}
By letting $e_n = h(t - t_n)$ and $f_n(x) = x(t_n)$, the question is resolved by determining when the sequence $\{(e_n,f_n)\}_{n=0}^{\infty}$ is effective.

\subsection{Introduction to frame theory}

The results in this paper implement tools and concepts from frame theory. We will introduce needed terminology and definitions, referring the reader to \cite{Cbook} for a more thorough discussion. 

In a Hilbert space $\mathcal{H}$, a \textit{frame} is a sequence of vectors $\{f_n\}_{n=0}^{\infty}$ for which there exist positive constants $A$ and $B$ such that
\begin{equation}\label{defn:frame}
A\|x\|^2 \leq \sum_{n = 0}^{\infty}|\langle x,f_n\rangle|^2 \leq B\|x\|^2 \quad\hbox{for all}~x \in \mathcal{H}.
\end{equation}
A frame is \textit{tight} if $A = B$ and \textit{Parseval} if $A = B = 1$. The sequence $\{f_n\}_{n=0}^{\infty}$ (not necessarily a frame) is \textit{Bessel} if the positive constant $B$ exists in Equation \eqref{defn:frame}. A \textit{Riesz basis} is a frame which ceases to be a frame if any of its vectors are removed.

Supposing $\{f_n\}_{n=0}^{\infty}$ and $\{g_n\}_{n=0}^{\infty}$ are frames, if for every $x \in \mathcal{H}$ 
\[x = \sum_{n=0}^{\infty}\langle x,f_n\rangle g_n,\]
then we say that
$\{g_n\}_{n=0}^{\infty}$ is a \textit{dual frame} for $\{f_n\}_{n=0}^{\infty}$. The dual frame condition is always symmetric, that is, if $\{g_n\}_{n=0}^{\infty}$ is a dual frame for $\{f_n\}_{n=0}^{\infty}$, then $\{f_n\}_{n=0}^{\infty}$ is a dual frame for $\{g_n\}_{n=0}^{\infty}$.

The frame condition in Equation \eqref{defn:frame} guarantees the existence of dual frames. In fact, given that $\{f_n\}_{n=0}^{\infty}$ is a frame, the operator $S : \mathcal{H} \rightarrow \mathcal{H}$ defined by $Sx = \sum_{n=0}^{\infty}\langle x,f_n\rangle f_n$ is bounded, positive, and invertible.
One can check that $\{S^{-1}f_n\}_{n=0}^{\infty}$ is a dual frame for $\{f_n\}_{n=0}^{\infty}$; indeed, it is referred to as the \textit{canonical dual frame}.
In general, dual frames are not unique. However, if $\{f_n\}_{n=0}^{\infty}$ is a Riesz basis, then its dual frame $\{g_n\}_{n=0}^{\infty}$, which is also a Riesz basis, is unique. Moreover, it can be shown that $\la f_m, g_n \ra = \delta_{m,n}$ for all $m, n \in \N_0$, leading us to refer to  $\{f_n\}_{n=0}^{\infty}$ and $\{g_n\}_{n=0}^{\infty}$ as \textit{biorthogonal Riesz bases}.

A frame provides reconstruction of any $x \in \mathcal{H}$ from inner products with the frame elements while also allowing for redundancy among those elements. 
This procedure is captured by the analysis and synthesis operators. The \textit{analysis operator} associated with the sequence $\{f_n\}_{n=0}^{\infty}$ is the map $\Theta_{f} : \mathcal{H} \rightarrow c(\N_0)$ given by
\begin{equation}\label{defn:analysis op}
\Theta_f(x) = \{\langle x,f_n\rangle\}_{n=0}^{\infty},
\end{equation}
where $c(\N_0)$ is the space of sequences on $\N_0$. 
When $\{f_n\}_{n=0}^{\infty}$ is Bessel, the operator $\Theta_f$ is bounded from $\mch$ into $\ell^2(\N_0)$.
However, this condition is not always assumed. Let $l(\N_0)$ denote the subspace of sequences $\{c_n\}_{n=0}^{\infty} \in c(\N_0)$ for which $\sum_{n=0}^{\infty} c_nf_n$ converges. The \textit{synthesis operator} associated with $\{f_n\}_{n=0}^{\infty}$ is the map $\Theta_f^* : l(\N_0) \rightarrow \mathcal{H}$ given by
\begin{equation}\label{defn:synthesis op}
\Theta_f^*(\{c_n\}_{n=0}^{\infty}) = \sum_{n = 0}^{\infty} c_nf_n.
\end{equation}
When $\{f_n\}_{n=0}^{\infty}$ is Bessel, we may replace $l(\N_0)$ by $\ell^2(\N_0)$ and then, as the notation suggests, the synthesis operator is the Hilbert space adjoint of the analysis operator. The \textit{frame operator} is $\Theta_f^*\Theta_f$ and is the identity operator if and only if $\{f_n\}_{n=0}^{\infty}$ is a Parseval frame.

We will consider the mixed Grammian matrix $\Theta_{\phi} \Theta_{\psi}^*$ for sequences $\{\phi_n\}_{n=0}^{\infty}$ and $\{\psi_n\}_{n=0}^{\infty}$ given by 
\begin{equation}\label{defn:grammian}
(\Theta_{\phi} \Theta_{\psi}^*)_{m,n} = \langle \psi_m, \phi_n \rangle.  
\end{equation}
This matrix in general does not define a bounded operator on $\ell^2(\N_0)$, so we need to take care when referring to its positivity. For our purposes, we say that an infinite matrix $T$ is positive if every principle submatrix is positive. That is, for all $n \in \N_0$, the operator
\[T_n = \begin{pmatrix}
t_{00} & t_{01} & \dots & t_{0n} & 0 & \dots \\
t_{10} & t_{11} & \dots & t_{1n} & 0 & \dots\\
\vdots & \vdots & \ddots & \vdots & \vdots \\
t_{n0} & t_{n1} & \dots & t_{nn}& 0 &   \\
0 & 0 & \dots & 0 & 0& \ddots \\
\vdots & \vdots & & &\ddots & \ddots
\end{pmatrix}\]
satisfies $\la T_n u, u \ra \geq 0$ for every $u \in \ell^2(\N_0)$.

Throughout this paper, we adopt the convention that indexing starts at $0$, unless otherwise stated, so indexing notation will be dropped when understood. 

\section{The dual Kaczmarz algorithm}

While effective sequences are useful in vector recovery, they need not retain their efficacy when subject to perturbation. This was shown by Czaja and Tanis in \cite{Cza-Tan} when they proved that a Riesz basis which is not an orthonormal basis cannot be effective. The counterexample then follows directly from a classic result of Paley and Wiener (see \cite{P-W}), namely that a sufficiently small perturbation of an orthonormal basis, which is necessarily effective, may produce a Riesz basis which is not an orthonormal basis---and hence no longer effective. With the intention of obtaining a better tolerance to perturbation, we introduce a variation on the Kaczmarz algorithm where two sequences work together to achieve reconstruction, in analogy to dual frames.

Let $\{\phi_n\}$ and $\{\psi_n\}$ be two linearly dense sequences in $\mch$ such that $\la \phi_n, \psi_n \ra = 1$. Given $x \in \mch$, we define the \textit{dual Kaczmarz algorithm} applied to $x$ by
\begin{equation}\label{defn:dual ka}
\begin{aligned}
x_0 &= \la x, \phi_0 \ra \psi_0, \\
x_n &= x_{n-1} + \la x-x_{n-1}, \phi_n \ra \psi_n, \quad n \geq 1.
\end{aligned}
\end{equation}
If $\norm{x-x_n} \to 0$ for all $x \in \mch$, then we say that $\{\phi_n\}$ and $\{\psi_n\}$ form an \textit{effective pair}. As will be demonstrated in Example \ref{obs:symmetry}, efficacy need not be preserved when $\phi_n$ and $\psi_n$ are interchanged in the algorithm. Hence, we will call the first sequence $\{\phi_n\}$ the analysis sequence and the second sequence $\{\psi_n\}$ the synthesis sequence, representing the ordering by $\{(\phi_n,\psi_n)\}$. If both $\{\(\phi_n, \psi_n\)\}$ and $\{\(\psi_n, \phi_n\)\}$ are effective pairs, we say that the sequences form a \textit{symmetric effective pair}. 
We note that this is distinct from the dual frame condition which is always symmetric.

Similarly to Kwapie\'{n} and Mycielski in \cite{Kwa-Myc}, we recursively define the auxiliary sequence $\{g_n\}$ for a pair $\{(\phi_n,\psi_n)\}$ as follows: 
\begin{equation}\label{defn:dual aux}
\begin{aligned}
g_0 &= \phi_0, \\
g_n &= \displaystyle{\phi_n - \sum_{k=0}^{n-1} \la \phi_n, \psi_k \ra g_k, \quad n \geq 1}.
\end{aligned}
\end{equation}
It is an inductive argument to show
\begin{equation}\label{xn approx}
x_n = \sum_{k=0}^n \la x, g_k \ra \psi_k, \quad n \geq 0.
\end{equation}
Consequently, if $\{(\phi_n,\psi_n)\}$ is an effective pair, then we have the reconstruction
\begin{equation}\label{pair recon}
x = \sum_{k=0}^\infty \la x, g_k \ra \psi_k.
\end{equation}
We likewise form an auxiliary sequence $\{\tilde{g}_n\}$ for $\{(\psi_n,\phi_n)\}$ by
\begin{equation}\label{defn: dual aux2}
\begin{aligned}
\tilde{g}_0 &= \psi_0 \\
\tilde{g}_n &= \displaystyle{\psi_n - \sum_{k=0}^{n-1} \la \psi_n, \phi_k \ra \tilde{g}_k, \quad n \geq 1}.
\end{aligned}
\end{equation}

\begin{rem}
Our notation $g_n, \tilde{g}_n$ suggests duality in the context of frames. Although this is sometimes the case, we also have examples where $\{(\phi_n, \psi_n)\}$ is an effective pair and one or both of $\{g_n\}$ and $\{\tilde{g}_n\}$ fail to be frames. This is shown in Example \ref{obs:not dual frames}.
\end{rem}

\begin{rem}
As an effective sequence forms an effective pair with itself, it is natural to ask whether the two sequences in an effective pair are independently effective. Appealing to Schauder bases which are not Riesz bases, we find many examples for which this is not necessarily the case. See Example~\ref{obs:not dual frames} for more details. 
\end{rem}

\begin{rem}
There are many more effective pairs than there are effective sequences. Indeed, Corollary \ref{cor:T pair to seq} will demonstrate that any effective sequence and invertible operator can generate an effective pair. 
\end{rem}

\begin{rem}
The concept of an effective pair translates more naturally to the context of a Banach space than that of an effective sequence. 
Although Kwapie\'n and Mycielski in \cite{Kwa-Myc} defined 
an effective sequence in a Banach space, $X$, their definition \eqref{defn:ka in Banach} relies upon two different sequences---one in $X$ and one in $X^*$---which is equivalent to our effective pair definition from \eqref{defn:dual ka}. As observed in Example~\ref{obs:not dual frames}, a Schauder basis and its biorthogonal dual are an effective pair. 
\end{rem}

\begin{thm}\label{thm:T to pair}
Let $T \in \mathcal{B}(\mch)$ be invertible. Then 
$\{\(\phi_n, \psi_n\)\}$ is an effective pair if and only if $\{\(T\phi_n, (T^{-1})^*\psi_n\)\}$ is an effective pair.
\end{thm}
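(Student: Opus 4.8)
The plan is to exhibit an intertwining between the two algorithms by the invertible operator $(T^{-1})^*$, reducing the equivalence of effectiveness to the fact that a bounded invertible operator preserves convergence and is a bijection of $\mch$. First I would confirm that $\{(T\phi_n,(T^{-1})^*\psi_n)\}$ is a legitimate pair for the dual Kaczmarz algorithm. Since $T^*(T^{-1})^* = (T^{-1}T)^* = I$, we get
\[
\la T\phi_n, (T^{-1})^*\psi_n\ra = \la \phi_n, T^*(T^{-1})^*\psi_n\ra = \la \phi_n, \psi_n\ra = 1,
\]
and because $T$ and $(T^{-1})^*$ are bounded invertible operators, hence homeomorphisms of $\mch$, they send linearly dense sequences to linearly dense sequences, so both $\{T\phi_n\}$ and $\{(T^{-1})^*\psi_n\}$ remain linearly dense.

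The key step is the intertwining identity. Fix $x \in \mch$, let $\{x_n\}$ be the approximations produced by $\{(\phi_n,\psi_n)\}$ applied to $x$, and let $\{y_n\}$ be the approximations produced by $\{(T\phi_n,(T^{-1})^*\psi_n)\}$ applied to $y := (T^{-1})^* x$. I claim that $y_n = (T^{-1})^* x_n$ for every $n$, which I would establish by induction on the recursion \eqref{defn:dual ka}. The whole computation hinges on a single simplification of the inner product,
\[
\la (T^{-1})^*(x - x_{n-1}), T\phi_n\ra = \la x - x_{n-1}, T^{-1}T\phi_n\ra = \la x - x_{n-1}, \phi_n\ra,
\]
so that applying $(T^{-1})^*$ to the update for $x_n$ reproduces exactly the update for $y_n$, with the base case $n=0$ handled identically. (Alternatively one can check that the auxiliary sequence for the new pair is $\{Tg_n\}$ and invoke the reconstruction formula \eqref{xn approx}, but the direct induction is cleaner.)

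Finally I would convert this identity into the stated equivalence. Because $(T^{-1})^*$ is bounded with bounded inverse $T^*$, the relation $\norm{y_n - y} = \norm{(T^{-1})^*(x_n - x)}$ shows that $\norm{x_n - x}\to 0$ if and only if $\norm{y_n - y}\to 0$; and since $x \mapsto (T^{-1})^* x$ is a bijection of $\mch$, the vector $y$ ranges over all of $\mch$ as $x$ does, so the quantifier ``for every $x \in \mch$'' transfers in both directions. This yields both implications simultaneously. The computations are essentially bookkeeping, so I anticipate no serious obstacle; the only points demanding care are selecting the correct substitution $y = (T^{-1})^* x$ (rather than, say, $y = Tx$), keeping the adjoints straight in the inner-product simplification, and noting that the substitution is a bijection so that effectiveness, being quantified over all vectors, passes cleanly between the two pairs.
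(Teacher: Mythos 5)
Your proof is correct and is essentially the paper's argument: both rest on the same intertwining identity $y_n = (T^{-1})^*x_n$ established by induction on the recursion, with your substitution $y=(T^{-1})^*x$ being the paper's choice of running the original algorithm on $T^*x$ read in the opposite direction. The only cosmetic difference is that you obtain both implications at once from the bijectivity of $(T^{-1})^*$, whereas the paper proves one direction and then applies it to $S=T^{-1}$ for the converse.
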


\begin{proof}
Suppose that $\{(\phi_n,\psi_n)\}$ is an effective pair. Let $x \in \mch$, and attain the sequence of approximations $\{y_n\}$ using Equation \eqref{defn:dual ka} applied to $T^*x$.
Since $\{(\phi_n,\psi_n)\}$ is an effective pair, we know $\norm{T^*x - y_n} \to 0$. Next, define the sequence $\{x_n\}$ via the dual Kaczmarz algorithm applied to $x$ using $\{(T\phi_n, (T^{-1})^*\psi_n)\}$, i.e.
\begin{align*}
x_0 &= \la x, T\phi_0 \ra (T^{-1})^*\psi_0, \\
x_n &= x_{n-1} + \la x-x_{n-1}, T\phi_n \ra (T^{-1})^*\psi_n, \quad n \geq 1.
\end{align*}
Observe that
\[x_0 = \la x, T\phi_0 \ra (T^{-1})^*\psi_0
    = (T^{-1})^*\( \la T^*x, \phi_0 \ra \psi_0 \)\\
    = (T^{-1})^* y_0.\]
Assume inductively that $x_{n-1} = (T^{-1})^* y_{n-1}$. Then
\begin{align*}
x_n &= (T^{-1})^*y_{n-1} + \la T^*x-T^*x_{n-1}, \phi_n \ra (T^{-1})^*\psi_n\\
    &= (T^{-1})^*\( y_{n-1} + \la T^*x - y_{n-1}, \phi_n \ra \psi_n\) \\
    &= (T^{-1})^* y_n.
\end{align*}
Therefore, $x_n = (T^{-1})^* y_n$ for all $n\in \N_0$. As $T^{-1}$ is bounded, we have
\begin{align*}
\norm{x-x_n} &= \norm{(T^{-1})^*T^{*}(x - x_n)}\\
&\leq \norm{(T^{-1})^*} \norm{T^{*}x - T^{*}x_n} = \norm{(T^{-1})^*} \norm{T^*x - y_n} \to 0,
\end{align*}
so that $\{(T\phi_n,(T^{-1})^*\psi_n)\}$ is an effective pair. Conversely, suppose that $\{(T\phi_n,(T^{-1})^*\psi_n)\}$ is an effective pair. Let $S = T^{-1}$. From the above argument, it follows that
\[\{(ST\phi_n,(S^{-1})^*(T^{-1})^*\psi_n)\} = \{(\phi_n,\psi_n)\}\] 
is an effective pair.
\end{proof}

\begin{cor}\label{cor:T pair to seq}
Let $T \in \mathcal{B}(\mch)$ be invertible. A linearly dense sequence $\{e_n\}$ of unit vectors is effective if and only if $\{(Te_n,(T^{-1})^*e_n)\}$ is a symmetric effective pair. 
\end{cor}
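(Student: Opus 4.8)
The plan is to reduce the corollary to Theorem \ref{thm:T to pair} by recognizing an effective sequence as a self-paired effective pair, and then to manufacture the two orderings required for symmetry by invoking that theorem with two different invertible operators. Everything should follow from bookkeeping, so I expect no genuinely hard analytic step.

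First I would record the elementary but essential observation that, for a linearly dense sequence of unit vectors $\{e_n\}$, the pair $\{(e_n, e_n)\}$ is admissible (since $\langle e_n, e_n \rangle = \|e_n\|^2 = 1$) and the dual Kaczmarz recursion \eqref{defn:dual ka} with $\phi_n = \psi_n = e_n$ collapses verbatim to the classical Kaczmarz recursion \eqref{defn:ka}. Hence the two families of approximants coincide termwise for every $x$, and so $\{e_n\}$ is effective if and only if $\{(e_n, e_n)\}$ is an effective pair. This identification is the bridge between the two notions appearing in the statement.

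For the forward direction, assume $\{e_n\}$ is effective, so that $\{(e_n, e_n)\}$ is an effective pair. Applying Theorem \ref{thm:T to pair} with $\phi_n = \psi_n = e_n$ immediately yields that $\{(Te_n, (T^{-1})^* e_n)\}$ is an effective pair, which settles one ordering. To obtain the opposite ordering I would set $S = (T^{-1})^*$; then $S^{-1} = T^*$ and hence $(S^{-1})^* = T$, so applying Theorem \ref{thm:T to pair} with the operator $S$ to the same effective pair $\{(e_n, e_n)\}$ produces that $\{(Se_n, (S^{-1})^* e_n)\} = \{((T^{-1})^* e_n, Te_n)\}$ is an effective pair. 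Writing $\phi_n = Te_n$ and $\psi_n = (T^{-1})^* e_n$, these two conclusions are precisely that $\{(\phi_n, \psi_n)\}$ and $\{(\psi_n, \phi_n)\}$ are effective pairs, so $\{(Te_n, (T^{-1})^* e_n)\}$ is a symmetric effective pair.

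Conversely, if $\{(Te_n, (T^{-1})^* e_n)\}$ is a symmetric effective pair, then in particular it is an effective pair, and the converse half of Theorem \ref{thm:T to pair} (again with $\phi_n = \psi_n = e_n$) returns that $\{(e_n, e_n)\}$ is an effective pair, whence $\{e_n\}$ is effective by the opening observation. The only points requiring care are the adjoint-inverse bookkeeping---verifying $(S^{-1})^* = T$ so that the two applications of Theorem \ref{thm:T to pair} genuinely deliver the two opposite orderings---and confirming that the transformed pairs are admissible; the latter is automatic, since $\langle Te_n, (T^{-1})^* e_n \rangle = \langle e_n, e_n \rangle = 1$ and the invertible maps $T$ and $(T^{-1})^*$ preserve linear density.
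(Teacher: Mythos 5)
Your proposal is correct and follows essentially the same route as the paper: identify effectiveness of $\{e_n\}$ with $\{(e_n,e_n)\}$ being an effective pair, then apply Theorem \ref{thm:T to pair} twice, once with $T$ and once with $(T^{-1})^*$, to obtain both orderings. Your extra bookkeeping (checking $(S^{-1})^*=T$, admissibility $\langle Te_n,(T^{-1})^*e_n\rangle=1$, and preservation of linear density) only makes explicit what the paper leaves implicit.
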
 
\begin{proof}
It is clear that $\{e_n\}$ is effective if and only if $\{\(e_n, e_n\)\}$ is an effective pair. Applying Theorem \ref{thm:T to pair} with $T$ and $(T^{-1})^*$, we conclude that $\{\(Te_n, (T^{-1})^* e_n\)\}$ and $\{\((T^{-1})^*e_n, Te_n\)\}$ are both effective pairs if and only if $\{\(e_n, e_n\)\}$ is an effective pair. 
\end{proof}

It would be very advantageous to achieve a characterization of all effective pairs. Most of our results in this direction, however, depend upon the existence of a certain operator $T$ satisfying $\psi_n = T\phi_n$. With this in mind, we next present a string of lemmas that are tied to this condition, each imposing increasingly stringent hypotheses on $T$. It is assumed in every lemma that $\{\phi_n\}$ and $\{\psi_n\}$ are linearly dense in $\mch$ and that $\{g_n\}$ and $\{\tilde{g}_n\}$ are constructed according to Equations \eqref{defn:dual aux} and \eqref{defn: dual aux2}. In these lemmas, as well as in the rest of the paper, we will reference $T^{\frac{1}{2}}$ as the positive square root of $T$, when defined. 
\begin{lem}\label{lem:A1}
If $T \in \mcb(\mch)$ is such that $T g_n = \tilde{g}_n$ and
\begin{equation}\label{eq:phi psi sym}
\la \phi_n, \psi_k \ra = \la \psi_n, \phi_k \ra \ \text{for all} \ n, k \in \N_0,
\end{equation}
then $T\phi_n = \psi_n$ for all $n \in \N_0$.
\end{lem}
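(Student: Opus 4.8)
The plan is to establish $T\phi_n = \psi_n$ by comparing the two defining recursions for the auxiliary sequences after pushing $T$ through the one for $g_n$. The crucial observation is that the hypothesis $Tg_n = \tilde{g}_n$ is assumed to hold for \emph{every} index, so I may substitute $Tg_k = \tilde{g}_k$ freely inside any finite sum; this is what allows the argument to proceed without a genuine induction.

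First I would dispatch the base case directly. Since $g_0 = \phi_0$ and $\tilde{g}_0 = \psi_0$, the hypothesis $Tg_0 = \tilde{g}_0$ gives $T\phi_0 = \psi_0$ at once.

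For $n \geq 1$, I would apply the bounded (hence linear) operator $T$ to the recursion \eqref{defn:dual aux} and use $Tg_k = \tilde{g}_k$ to obtain
\begin{equation*}
\tilde{g}_n = Tg_n = T\phi_n - \sum_{k=0}^{n-1} \la \phi_n, \psi_k \ra Tg_k = T\phi_n - \sum_{k=0}^{n-1} \la \phi_n, \psi_k \ra \tilde{g}_k.
\end{equation*}
On the other hand, the definition \eqref{defn: dual aux2} of $\tilde{g}_n$ reads
\begin{equation*}
\tilde{g}_n = \psi_n - \sum_{k=0}^{n-1} \la \psi_n, \phi_k \ra \tilde{g}_k.
\end{equation*}
Equating these two expressions for $\tilde{g}_n$ and invoking the symmetry hypothesis \eqref{eq:phi psi sym}, namely $\la \phi_n, \psi_k \ra = \la \psi_n, \phi_k \ra$, the two sums coincide termwise and cancel, leaving $T\phi_n = \psi_n$, as desired.

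The argument is in essence a bookkeeping of the two recursions, so I do not expect a serious obstacle. The only points requiring care are confirming that the boundedness of $T$ legitimizes passing it through the recursion — immediate, since each sum is finite — and verifying that the symmetry condition \eqref{eq:phi psi sym} is applied with the indices in the correct order so that the cancellation is exact. If one preferred, the same computation could be phrased as an induction on $n$, but the universal hypothesis $Tg_k = \tilde{g}_k$ renders that unnecessary.
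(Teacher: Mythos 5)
Your proof is correct and follows essentially the same route as the paper's: both handle $n=0$ via $g_0=\phi_0$, $\tilde g_0=\psi_0$, and for $n\ge 1$ both apply $T$ to the recursion for $g_n$, substitute $Tg_k=\tilde g_k$, and cancel the sums against the recursion for $\tilde g_n$ using the symmetry condition \eqref{eq:phi psi sym}. No gaps.
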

\begin{proof}
This is clear for $n = 0$,
\begin{align*}
T\phi_0 = Tg_0 = \tilde{g}_0 = \psi_0.
\end{align*}
For $n \geq 1$, observe that
\[\psi_n - \sum_{k=0}^{n-1}\la \psi_n, \phi_k \ra \tilde{g}_k =\tilde{g}_n = Tg_n = T\phi_n - \sum_{k=0}^{n-1} \la \phi_n, \psi_k \ra Tg_k = T\phi_n - \sum_{k=0}^{n-1} \la \psi_n, \phi_k \ra \tilde{g}_k, \]
so $T \phi_n = \psi_n$, as desired.
\end{proof}

\begin{lem}\label{lem:A2}
Suppose $T \in \mcb(\mch)$ is positive and such that $T\phi_n = \psi_n$ for all $n \in \N_0$. If we define $\{e_n\}$ by
\begin{equation}\label{defn:en}
e_n = T^{\frac{1}{2}}\phi_n,
\end{equation}
then $h_n = T^{\frac{1}{2}}g_n$, where $\{h_n\}$ is the auxiliary sequence to $\{e_n\}$ as in Equation \eqref{defn:aux}. 
\end{lem}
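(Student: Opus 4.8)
The plan is to establish $h_n = T^{\frac{1}{2}} g_n$ by induction on $n$, exploiting the fact that the recursions \eqref{defn:aux} and \eqref{defn:dual aux} defining $h_n$ and $g_n$ have identical shape. Because $T$ is positive it is self-adjoint, and its positive square root $T^{\frac{1}{2}}$ is bounded and self-adjoint as well; these two facts, together with the hypothesis $T\phi_n = \psi_n$, are the only tools I expect to need.

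The base case is immediate: $h_0 = e_0 = T^{\frac{1}{2}}\phi_0 = T^{\frac{1}{2}} g_0$, using $e_0 = T^{\frac{1}{2}}\phi_0$ and $g_0 = \phi_0$. For the inductive step, I would assume $h_k = T^{\frac{1}{2}} g_k$ for all $k < n$ and substitute into the recursion $h_n = e_n - \sum_{k=0}^{n-1}\la e_n, e_k\ra h_k$. Replacing $e_n$ by $T^{\frac{1}{2}}\phi_n$ and each $h_k$ by $T^{\frac{1}{2}} g_k$, the goal is to factor $T^{\frac{1}{2}}$ out on the left and recover precisely the defining recursion for $g_n$. For this factoring to land on $g_n$, the scalar coefficients must agree; that is, I must show $\la e_n, e_k \ra = \la \phi_n, \psi_k \ra$, the coefficient appearing in \eqref{defn:dual aux}. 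This coefficient identity is the crux of the argument and the step I expect to require the most care.

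To verify it, I would compute $\la e_n, e_k\ra = \la T^{\frac{1}{2}}\phi_n, T^{\frac{1}{2}}\phi_k\ra = \la T\phi_n, \phi_k\ra = \la \psi_n, \phi_k\ra$, shifting one factor of $T^{\frac{1}{2}}$ across the inner product by self-adjointness and then invoking $T\phi_n = \psi_n$. This yields $\la \psi_n, \phi_k\ra$, which differs in appearance from the desired $\la \phi_n, \psi_k\ra$; the remaining point is that the two coincide, since $\la \psi_n, \phi_k\ra = \la T\phi_n, \phi_k\ra = \la \phi_n, T\phi_k\ra = \la \phi_n, \psi_k\ra$ by self-adjointness of $T$. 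In effect, the positivity of $T$ silently furnishes the symmetry \eqref{eq:phi psi sym} that had to be assumed by hand in Lemma \ref{lem:A1}. Once the coefficients are matched, pulling $T^{\frac{1}{2}}$ out of the sum gives $h_n = T^{\frac{1}{2}}\big(\phi_n - \sum_{k=0}^{n-1}\la\phi_n,\psi_k\ra g_k\big) = T^{\frac{1}{2}} g_n$, closing the induction. I do not anticipate any analytic subtleties, as every manipulation is a finite algebraic rearrangement valid for bounded self-adjoint operators.
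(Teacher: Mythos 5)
Your proposal is correct and follows essentially the same route as the paper: the base case $h_0 = T^{\frac{1}{2}}g_0$, an induction on $n$, and the key coefficient identity $\la e_n, e_k\ra = \la \phi_n, \psi_k\ra$ obtained by moving $T^{\frac{1}{2}}$ across the inner product and using $T\phi_n = \psi_n$. The only cosmetic difference is that the paper shifts $T^{\frac{1}{2}}$ to the second slot directly to get $\la \phi_n, T\phi_k\ra = \la\phi_n,\psi_k\ra$, whereas you pass through $\la\psi_n,\phi_k\ra$ first; both are the same self-adjointness computation.
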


\begin{proof}
First note that 
\begin{equation} \label{eq:en sym}
\la e_m, e_n \ra 
	= \la T^{\frac{1}{2}}\phi_m, T^{\frac{1}{2}}\phi_n \ra 
    = \la \phi_m, T \phi_n \ra
    = \la \phi_m, \psi_n \ra 
\end{equation}
for all $m,n$. Observe that
\[h_0 = e_0 = T^{\frac{1}{2}}\phi_0 = T^{\frac{1}{2}} g_0.\]
Assume inductively that $h_k = T^{\frac{1}{2}}g_k$ for all $0 \leq k <n $. It follows that
\[h_{n} = e_n - \sum_{k=0}^{n-1} \la \phi_{n}, \psi_k \ra h_k = T^{\frac{1}{2}}\( \phi_{n} - \sum_{k=0}^{n-1} \la \phi_{n}, \psi_k \ra  g_k \) = T^{\frac{1}{2}} g_{n} \]
which concludes the induction.
\end{proof}

\begin{rem}
Note that in Lemma \ref{lem:A2} and hereafter the sequence $e_n = T^{\frac{1}{2}}\phi_n$ is not necessarily a sequence of unit vectors. 
\end{rem}

\begin{lem}\label{lem:A3}
Let $T \in \mathcal{B}(\mch)$ be positive and invertible. If $T \phi_n = \psi_n$ for all $n \in \N_0$, then $Tg_n = \tilde{g}_n$ for all $n \in \N_0$.
\end{lem}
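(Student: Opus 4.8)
The plan is to proceed by induction on $n$, mirroring the parallel recursive definitions of $\{g_n\}$ and $\{\tilde{g}_n\}$ in Equations \eqref{defn:dual aux} and \eqref{defn: dual aux2}. The essential preliminary observation is that the hypothesis $\psi_n = T\phi_n$, combined with the fact that a positive operator is self-adjoint, already forces the symmetry condition \eqref{eq:phi psi sym}: for all $n,k \in \N_0$,
\[\la \phi_n, \psi_k \ra = \la \phi_n, T\phi_k \ra = \la T\phi_n, \phi_k \ra = \la \psi_n, \phi_k \ra.\]
This is the only point at which the structure of $T$ really enters, and it is exactly what makes the two recursions coincide once $T$ is applied.

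With this symmetry in hand, the base case is immediate, since $Tg_0 = T\phi_0 = \psi_0 = \tilde{g}_0$. For the inductive step I would assume $Tg_k = \tilde{g}_k$ for all $k < n$, apply the bounded linear operator $T$ to the defining recursion for $g_n$, and compute
\[Tg_n = T\phi_n - \sum_{k=0}^{n-1} \la \phi_n, \psi_k \ra Tg_k = \psi_n - \sum_{k=0}^{n-1} \la \phi_n, \psi_k \ra \tilde{g}_k.\]
Substituting $\la \phi_n, \psi_k \ra = \la \psi_n, \phi_k \ra$ from the symmetry above converts the right-hand side into precisely the defining expression for $\tilde{g}_n$ in \eqref{defn: dual aux2}, which closes the induction.

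I do not anticipate any serious obstacle: the content of the lemma is simply that self-adjointness of $T$ synchronizes the two auxiliary constructions, and everything else is a routine induction using linearity and boundedness of $T$. It is worth remarking that invertibility of $T$ is not actually used in this implication---only positivity (hence self-adjointness) is needed---so the invertibility hypothesis is presumably carried along to remain consistent with the running assumptions of the neighboring lemmas rather than because the argument requires it.
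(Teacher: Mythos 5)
Your proof is correct and follows essentially the same induction as the paper, with the inductive step hinging on the identity $\la \phi_n,\psi_k\ra = \la \psi_n,\phi_k\ra$. The only (minor) difference is that you derive this symmetry directly from self-adjointness via $\la \phi_n, T\phi_k\ra = \la T\phi_n,\phi_k\ra$, whereas the paper's chain $\la \phi_n,\psi_k\ra = \la T\phi_n, T^{-1}\psi_k\ra = \la \psi_n,\phi_k\ra$ nominally invokes invertibility; your observation that invertibility is not actually needed for this implication is accurate.
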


\begin{proof}
This is clear for $n=0$,
\[Tg_0 = T \phi_0 = \psi_0 = \tilde{g}_0.\]
Assume inductively that $Tg_k = \tilde{g}_k$ for all $0 \leq k < n$. Observe that
\[T g_n = T\phi_n - \sum_{k=0}^{n-1} \la \phi_n, \psi_k \ra T g_k =  \psi_n - \sum_{k=0}^{n-1} \la T\phi_n, T^{-1}\psi_k \ra \tilde{g}_k = \psi_n - \sum_{k=0}^{n-1} \la \psi_n, \phi_k \ra \tilde{g}_k = \tilde{g}_n\]
Thus, the statement holds for all $n \in \N_0$. 
\end{proof}

\subsection{Towards a characterization of symmetric effective pairs}

After the model of Haller and Szwarc in \cite{Hall-Szw}, we seek necessary and sufficient conditions for a pair of sequences to be an effective pair. As previously mentioned, most of our results in this area depend upon a positive, invertible operator relating $\{\phi_n\}$ and $\{\psi_n\}$. In finite dimensions, we attain such an operator by exploiting the analysis and synthesis operators associated with the given sequences $\{\phi_n\}$ and $\{\psi_n\}$, as seen in Equations \eqref{defn:analysis op} and \eqref{defn:synthesis op}. In infinite dimensions, however, the situation becomes more complex as we are forced to impose various conditions to ensure the existence of such an operator $T \in \mcb(\mch)$.

Corresponding to the pair $\{(\phi_n,\psi_n)\}$, we define 
\begin{equation}\label{I+N}
I+N = \begin{pmatrix}
1 & 0 & 0 & \cdots \\
\la \phi_1,\psi_0\ra & 1 & 0 & \cdots \\
\la \phi_2,\psi_0\ra & \la \phi_2, \psi_1 \ra & 1 & \cdots \\
\la \phi_3,\psi_0\ra & \la \phi_3, \psi_1 \ra & \la \phi_3, \psi_2 \ra & \cdots \\
\vdots & \vdots & \vdots &\ddots \\
\end{pmatrix}
\end{equation}
and $I+V$ as the algebraic inverse of $I+N$. To be sure, this is equivalent to 
\begin{equation} \label{eq:N and V}
(I+V)(I+N) = (I+N)(I+V) = I.
\end{equation}

\begin{prop}\label{prop:V implies sym ep}
Suppose that $\{\phi_n\}$ and $\{\psi_n\}$ are linearly dense sequences in $\mch$ satisfying ${\la \phi_n, \psi_n \ra = 1}$ for all $n \in \N_0$. Furthermore, suppose that there exists a positive, invertible $T \in \mcb(\mch)$ such that $\psi_n = T \phi_n$ for all $n \in \N_0$ and that $V$ is a partial isometry. Then $\{ (\phi_n, \psi_n)\}$ is a symmetric effective pair. 
\end{prop}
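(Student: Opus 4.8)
The plan is to reduce the symmetric effectiveness of the pair $\{(\phi_n,\psi_n)\}$ to the ordinary effectiveness of a single sequence, namely $e_n = T^{\frac{1}{2}}\phi_n$, and then to invoke the Haller--Szwarc partial-isometry criterion from \cite{Hall-Szw}. Since $T$ is positive and invertible, its positive square root $T^{\frac{1}{2}}$ is a positive, invertible, self-adjoint operator, so $e_n = T^{\frac{1}{2}}\phi_n$ is well defined; because $T^{\frac{1}{2}}$ is invertible and $\{\phi_n\}$ is linearly dense, the sequence $\{e_n\}$ is again linearly dense. First I would check that each $e_n$ is a unit vector: using $\psi_n = T\phi_n$ and $\langle\phi_n,\psi_n\rangle = 1$ we get $\|e_n\|^2 = \langle T^{\frac{1}{2}}\phi_n, T^{\frac{1}{2}}\phi_n\rangle = \langle \phi_n, T\phi_n\rangle = \langle\phi_n,\psi_n\rangle = 1$, so $\{e_n\}$ is a linearly dense sequence of unit vectors, exactly the setting of Corollary~\ref{cor:T pair to seq} and of \cite{Hall-Szw}.

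The crux is to identify the two triangular matrices attached to the two problems. By the computation in Equation~\eqref{eq:en sym} (from Lemma~\ref{lem:A2}), we have $\langle e_m, e_n\rangle = \langle\phi_m,\psi_n\rangle$ for all $m,n$; comparing the definitions in Equations~\eqref{I+M} and \eqref{I+N} entrywise then shows that the Gram data $I+M$ of $\{e_n\}$ coincides with $I+N$ of the pair, i.e.\ $M = N$. Both matrices are lower triangular with unit diagonal, so their algebraic inverses are computed by forward substitution and are uniquely determined; hence $U = V$, where $I+U$ is the algebraic inverse of $I+M$. Since $V$ is assumed to be a partial isometry, so is $U$, and the Haller--Szwarc theorem yields that $\{e_n\}$ is effective.

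It remains to transfer effectiveness back to the pair. I would apply Corollary~\ref{cor:T pair to seq} with the invertible operator $R = T^{-\frac{1}{2}}$: because $T^{\frac{1}{2}}$ is positive and self-adjoint, $R e_n = T^{-\frac{1}{2}}T^{\frac{1}{2}}\phi_n = \phi_n$ and $(R^{-1})^* e_n = (T^{\frac{1}{2}})^* e_n = T^{\frac{1}{2}} e_n = T\phi_n = \psi_n$, so the symmetric effective pair $\{(R e_n, (R^{-1})^* e_n)\}$ produced by the corollary is precisely $\{(\phi_n,\psi_n)\}$. Since $\{e_n\}$ is effective, the corollary concludes that $\{(\phi_n,\psi_n)\}$ is a symmetric effective pair.

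I expect the one genuinely load-bearing step to be the matrix identification $M = N$ via Equation~\eqref{eq:en sym}: everything hinges on the observation that passing through $T^{\frac{1}{2}}$ turns the mixed inner products $\langle\phi_m,\psi_n\rangle$ of the pair into honest Gram entries $\langle e_m, e_n\rangle$ of a single unit-norm sequence, so that the partial-isometry hypothesis on $V$ becomes exactly the Haller--Szwarc condition on $U$. The remaining steps---the unit-norm check, linear density, and the symmetric transfer through Corollary~\ref{cor:T pair to seq}---are routine once the self-adjointness of $T^{\frac{1}{2}}$ is used to dispatch the adjoints cleanly.
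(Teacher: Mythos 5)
Your proof is correct and follows essentially the same route as the paper: define $e_n = T^{\frac{1}{2}}\phi_n$, use Equation \eqref{eq:en sym} to get $M=N$ and hence $U=V$, invoke the Haller--Szwarc partial-isometry criterion to conclude $\{e_n\}$ is effective, and transfer back via Corollary \ref{cor:T pair to seq}. You additionally spell out the unit-norm and linear-density checks and the explicit choice $R=T^{-\frac{1}{2}}$, which the paper leaves implicit.
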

\begin{proof}
Define $\{e_n\}$ by Equation \eqref{defn:en}, so that
$\phi_n = T^{-\frac{1}{2}}e_n$ and $\psi_n = T^{\frac{1}{2}}e_n$. Since Equation \eqref{eq:en sym} holds, we have that $M = N$, where $M$ and $N$ are as in Equations \eqref{I+M} and \eqref{I+N}, respectively. 
Consequently, $U = V$, and $\{e_n\}$ is effective by \cite[Theorem~1]{Hall-Szw}. 
By Corollary \ref{cor:T pair to seq}, we conclude that $\{(\phi_n, \psi_n)\}$ is a symmetric effective pair.  
\end{proof}

\begin{prop}\label{prop:dual implies sym ep}
Suppose that $\{\phi_n\}$ and $\{\psi_n\}$ are linearly dense sequences in $\mch$, whose respective auxiliary sequences are $\{g_n\}$ and $\{\tilde{g}_n\}$, as in Equations \eqref{defn:dual aux} and \eqref{defn: dual aux2}. Suppose that ${\la \phi_n, \psi_n \ra = 1}$ for all $n \in \N_0$, that $\la \phi_n, \psi_k \ra = \la \psi_n, \phi_k \ra$ for all $n, k \in \N_0$, and that $\{g_n\}$ and $\{\tilde{g}_n\}$ are canonical dual frames. Then $\{ (\phi_n, \psi_n)\}$ is a symmetric effective pair. 
\end{prop}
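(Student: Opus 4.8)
The plan is to manufacture a positive, invertible operator $T$ satisfying $\psi_n = T\phi_n$ out of the canonical-dual hypothesis, and then to collapse the problem onto the classical single-sequence Kaczmarz theory by way of Corollary~\ref{cor:T pair to seq}. Since $\{g_n\}$ is a frame, its frame operator $S$, defined by $Sx = \sum_n \la x, g_n \ra g_n$, is bounded, positive, and invertible, and the canonical dual frame of $\{g_n\}$ is $\{S^{-1}g_n\}$. As $\{\tilde{g}_n\}$ is assumed to be exactly this canonical dual, we have $\tilde{g}_n = S^{-1}g_n$. Setting $T = S^{-1}$ yields a positive, invertible operator with $Tg_n = \tilde{g}_n$, and the hypothesis $\la \phi_n, \psi_k \ra = \la \psi_n, \phi_k \ra$ is precisely condition~\eqref{eq:phi psi sym}; hence Lemma~\ref{lem:A1} upgrades this to $T\phi_n = \psi_n$ for all $n$.

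Next I would pass to the single sequence $e_n = T^{\frac{1}{2}}\phi_n$ as in \eqref{defn:en}. By the computation \eqref{eq:en sym} together with the hypothesis $\la \phi_n, \psi_n \ra = 1$, one has $\norm{e_n}^2 = \la \phi_n, \psi_n \ra = 1$, so $\{e_n\}$ is a linearly dense sequence of unit vectors, the linear density coming from the invertibility of $T^{\frac{1}{2}}$. Lemma~\ref{lem:A2} then identifies the Kwapie\'n--Mycielski auxiliary sequence of $\{e_n\}$ as $h_n = T^{\frac{1}{2}}g_n = S^{-\frac{1}{2}}g_n$.

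The key remaining step is to show that $\{h_n\}$ is a Parseval frame, which I would do by computing its frame operator directly, using the self-adjointness of $S^{-\frac{1}{2}}$:
\[
\sum_n \la x, S^{-\frac{1}{2}}g_n \ra\, S^{-\frac{1}{2}}g_n
= S^{-\frac{1}{2}}\Big( \sum_n \la S^{-\frac{1}{2}}x, g_n \ra g_n \Big)
= S^{-\frac{1}{2}} S\, S^{-\frac{1}{2}} x = x
\]
for every $x \in \mch$. Thus the frame operator of $\{h_n\}$ is the identity, so $\{h_n\}$ is Parseval, and the Kwapie\'n--Mycielski characterization \cite{Kwa-Myc} recalled in the introduction gives that $\{e_n\}$ is effective. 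Finally, writing $\phi_n = T^{-\frac{1}{2}}e_n$ and $\psi_n = T\phi_n = T^{\frac{1}{2}}e_n$, I would apply Corollary~\ref{cor:T pair to seq} with the positive (hence self-adjoint) invertible operator $R = T^{-\frac{1}{2}}$, for which $Re_n = \phi_n$ and $(R^{-1})^*e_n = T^{\frac{1}{2}}e_n = \psi_n$; effectiveness of $\{e_n\}$ is then equivalent to $\{(\phi_n,\psi_n)\}$ being a symmetric effective pair, which completes the argument.

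I expect the main obstacle to be conceptual rather than computational: the decisive move is recognizing that the canonical-dual hypothesis is exactly the structural input needed to build $T = S^{-1}$ with $Tg_n = \tilde{g}_n$, which is what unlocks Lemma~\ref{lem:A1} and hence the operator relation $\psi_n = T\phi_n$. Once that operator is in hand, the rest is a chain of the already-established Lemmas~\ref{lem:A1} and~\ref{lem:A2}, the short Parseval computation above, and Corollary~\ref{cor:T pair to seq}.
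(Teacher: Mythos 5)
Your proof is correct and follows essentially the same route as the paper's: extract $T=S^{-1}$ from the canonical-dual hypothesis, use Lemma~\ref{lem:A1} to get $T\phi_n=\psi_n$, pass to $e_n=T^{\frac{1}{2}}\phi_n$ via Lemma~\ref{lem:A2}, show $\{h_n\}$ is Parseval, and conclude with \cite{Kwa-Myc} and Corollary~\ref{cor:T pair to seq}. The only (harmless) difference is that you verify Parsevality by computing the frame operator $S^{-\frac{1}{2}}SS^{-\frac{1}{2}}=I$ directly, whereas the paper uses the dual-frame reconstruction formula; your explicit check that $\norm{e_n}=1$ is a nice touch the paper leaves implicit.
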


\begin{proof}
Since $\{g_n\}$ and $\{\tilde{g}_n\}$ are canonical dual frames, we write $\tilde{g}_n= Tg_n$ where $T^{-1}$ is the frame operator for $\{g_n\}$. By Lemma \ref{lem:A1}, we know that $T \phi_n = \psi_n$ for all $n \in \N_0$. Again define $\{e_n\}$ and $\{h_n\}$ by Equation \eqref{defn:en} and Equation \eqref{defn:aux}, respectively. By Lemma \ref{lem:A2}, we know that $h_n = T^{\frac{1}{2}}g_n = T^{-\frac{1}{2}}\tilde{g}_n$ for all $n \in \N_0$. 

Since $\{g_n\}$ and $\{\tilde{g_n}\}$ are dual frames, we know that $\{T^{\frac{1}{2}}g_n\}$ and $\{T^{-\frac{1}{2}} \tilde{g}_n\}$ are also dual frames.
For~$x \in \mch$, observe that
\[x = \sum_{n=0}^{\infty} \la x, T^{-\frac{1}{2}} \tilde{g}_n \ra T^{\frac{1}{2}}g_n= \sum_{n=0}^{\infty} \la x, h_n \ra h_n\]
from which it follows that
\[\norm{x}^2 = \la x, x \ra = \sum_{n=0}^{\infty} \la x, h_n \ra \la h_n,x \ra = \sum_{n=0}^{\infty} \abs{\la x, h_n\ra}^2.\]
Therefore, $\{h_n\}$ is a Parseval frame, so $\{e_n\}$ is effective by \cite{Kwa-Myc}. Noting that $\phi_n = T^{-\frac{1}{2}}e_n$ and $\psi_n = T^{\frac{1}{2}}e_n$, we conclude that $\{(\phi_n, \psi_n)\}$ is a symmetric effective pair by Corollary \ref{cor:T pair to seq}.
\end{proof}

\begin{thm}\label{thm:equiv1}
Suppose that $\{\phi_n\}$ and $\{\psi_n\}$ are linearly dense sequences in $\mch$, whose respective auxiliary sequences are $\{g_n\}$ and $\{\tilde{g}_n\}$ as in Equations \eqref{defn:dual aux} and \eqref{defn: dual aux2}. Assume ${\la \phi_n, \psi_n \ra = 1}$ for all $n \in \N_0$ and suppose there exists a positive, invertible $T \in \mcb(\mch)$ such that $T \phi_n = \psi_n$ for all $n \in \N_0$. The following are then equivalent:
\begin{enumerate}
	\item[$(i)$] $V$  is a partial isometry, where $V$ is given by Equation \eqref{eq:N and V}.
	\item[$(ii)$] $\{g_n\}$ and $\{\tilde{g}_n\}$ are canonical dual frames. 
	\item[$(iii)$] $\{ (\phi_n, \psi_n) \}$ is a symmetric effective pair.
\end{enumerate}
Moreover, if any of these conditions hold, then $T^{-1}$ is the frame operator for $\{g_n\}$. 
\end{thm}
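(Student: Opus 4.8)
The plan is to route all three conditions through the auxiliary unit-vector sequence $e_n = T^{\frac{1}{2}}\phi_n$ of Equation \eqref{defn:en}, treating ``$\{e_n\}$ is effective'' as a hub to which $(i)$, $(ii)$, and $(iii)$ each connect directly. First I would record the basic features of $\{e_n\}$: it is linearly dense, being the image of the linearly dense $\{\phi_n\}$ under the bounded invertible operator $T^{\frac{1}{2}}$, and it consists of unit vectors, since by \eqref{eq:en sym} we have $\norm{e_n}^2 = \la e_n, e_n \ra = \la \phi_n, \psi_n \ra = 1$. Crucially, \eqref{eq:en sym} also gives $\la e_n, e_k \ra = \la \phi_n, \psi_k \ra$, so the matrix $M$ of \eqref{I+M} associated to $\{e_n\}$ coincides with the matrix $N$ of \eqref{I+N}; passing to algebraic inverses then yields $U = V$.

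For $(i) \Leftrightarrow (iii)$ I would argue as follows. Since $U = V$ and $\{e_n\}$ is a linearly dense sequence of unit vectors, \cite[Theorem~1]{Hall-Szw} says that $V$ is a partial isometry if and only if $\{e_n\}$ is effective. Next I would apply Corollary \ref{cor:T pair to seq} with the bounded invertible operator $R = T^{-\frac{1}{2}}$: because $R$ is self-adjoint, one computes $Re_n = \phi_n$ and $(R^{-1})^*e_n = T^{\frac{1}{2}}e_n = \psi_n$, so that $\{(Re_n, (R^{-1})^*e_n)\} = \{(\phi_n, \psi_n)\}$, and the corollary gives that $\{e_n\}$ is effective exactly when $\{(\phi_n, \psi_n)\}$ is a symmetric effective pair. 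Chaining these two equivalences delivers $(i) \Leftrightarrow (iii)$.

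For $(i) \Leftrightarrow (ii)$ I would invoke Kwapie\'n--Mycielski: $\{e_n\}$ is effective if and only if $\{h_n\}$ is a Parseval frame, while Lemma \ref{lem:A2} gives $h_n = T^{\frac{1}{2}}g_n$ and Lemma \ref{lem:A3} gives $\tilde{g}_n = Tg_n$. The task is to translate the Parseval condition into the language of $\{g_n\}$. If $\{T^{\frac{1}{2}}g_n\}$ is Parseval, then $\{g_n\} = \{T^{-\frac{1}{2}}h_n\}$ is a frame, as the bounded invertible image of a frame; comparing frame operators through $T^{\frac{1}{2}}S_g T^{\frac{1}{2}} = I$ forces the frame operator $S_g$ of $\{g_n\}$ to equal $T^{-1}$, whence the canonical dual of $\{g_n\}$ is $\{S_g^{-1}g_n\} = \{Tg_n\} = \{\tilde{g}_n\}$; this is $(ii)$, and en route $T^{-1}$ is identified as the frame operator. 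Conversely, if $\{g_n\}$ and $\{\tilde{g}_n\}$ are canonical dual frames, then $\tilde{g}_n = S_g^{-1}g_n$; combined with $\tilde{g}_n = Tg_n$ and the completeness of the frame $\{g_n\}$, this forces $S_g^{-1} = T$, so that $\{T^{\frac{1}{2}}g_n\}$ has frame operator $T^{\frac{1}{2}}S_g T^{\frac{1}{2}} = I$ and is therefore Parseval. This closes $(i) \Leftrightarrow (ii)$ and simultaneously establishes the ``moreover'' clause.

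The main obstacle I anticipate lies in the bookkeeping of the $(i) \Leftrightarrow (ii)$ step. One must first certify that $\{g_n\}$ is genuinely a frame before invoking its frame operator $S_g$, and then use the completeness of $\{g_n\}$ (automatic, since a frame spans a dense subspace) to upgrade the pointwise identity $S_g^{-1}g_n = Tg_n$ to the operator identity $S_g^{-1} = T$. Equally delicate is tracking the distinction between an arbitrary dual frame and the \emph{canonical} one: it is precisely the identification $S_g = T^{-1}$ together with $\tilde{g}_n = Tg_n$ that certifies $\{\tilde{g}_n\}$ as the canonical dual of $\{g_n\}$, rather than merely some dual.
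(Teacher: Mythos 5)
Your proposal is correct and follows essentially the same route as the paper: both arguments reduce everything to the auxiliary sequence $e_n = T^{\frac{1}{2}}\phi_n$, use Equation \eqref{eq:en sym} to get $M=N$ and hence $U=V$, and then invoke Haller--Szwarc, Kwapie\'n--Mycielski, Corollary \ref{cor:T pair to seq}, and Lemmas \ref{lem:A2} and \ref{lem:A3}. The only differences are organizational --- you arrange the equivalences as a hub around ``$\{e_n\}$ is effective'' rather than the paper's cycle $(i)\Rightarrow(iii)\Rightarrow(ii)\Rightarrow(i)$ --- and your explicit computation of the frame operator $S_g = T^{-1}$ in the $(i)\Leftrightarrow(ii)$ step is, if anything, a more careful justification of the ``canonical'' claim than the paper gives.
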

\begin{proof}
We will show

\centerline{$(i) \Rightarrow (iii) \Rightarrow (ii) \Rightarrow (i).$}

Suppose $(i)$ holds. It is immediate from Proposition \ref{prop:V implies sym ep} that $\{(\phi_n, \psi_n)\}$ is a symmetric effective pair.

Suppose $(iii)$ holds. Define the sequences $\{e_n\}$ and $\{h_n\}$ by Equations \eqref{defn:en} and \eqref{defn:aux}, respectively.  As $e_n = T^{\frac{1}{2}}\phi_n$ and $e_n = T^{-\frac{1}{2}}\psi_n$, we have that $\{e_n\}$ is an effective sequence by Corollary \ref{cor:T pair to seq}, implying that $\{h_n\}$ is a Parsevel frame by \cite{Kwa-Myc}. By Lemmas \ref{lem:A3} and \ref{lem:A2}, we know that $\tilde{g}_n = Tg_n$ and $h_n = T^\frac{1}{2}g_n$. 
 As $g_n = T^{-\frac{1}{2}}h_n$, $\tilde{g}_n = T^{\frac{1}{2}}h_n$, and $\{h_n\}$ is a Parseval frame, we know that $\{g_n\}$ and $\{\tilde{g}_n\}$ are dual frames. Moreover, since $\tilde{g}_n = Tg_n$ for invertible $T$, we conclude that $\{g_n\}$ and $\{\tilde{g}_n\}$ must be canonical dual frames.

Suppose $(ii)$ holds.
 Since $T$ is self-adjoint, it is straightforward to verify Equation \eqref{eq:phi psi sym}, so by Proposition \ref{prop:dual implies sym ep}, we infer that $\{(\phi_n, \psi_n)\}$ is a symmetric effective pair. Defining $\{e_n\}$ by Equation \eqref{defn:en} and applying Corollary \ref{cor:T pair to seq}, we see that $\{e_n\}$ is an effective sequence. Appealing to \cite[Theorem~1]{Hall-Szw}, we conclude that the associated matrix $U$, as defined by Equation \eqref{I+M}, is a partial isometry. By Equation \eqref{eq:en sym}, $U=V$, and we have the desired result.

Moreover, if $(ii)$ holds, we have $\tilde{g}_n = S^{-1}g_n$, where $S$ is the frame operator of $\{g_n\}$. By Lemma \ref{lem:A3}, we also know that $\tilde{g}_n = Tg_n$. Because $\{g_n\}$ and $\{\tilde{g}_n\}$ are canonical dual frames, $T$ must be unique and thus $S=T^{-1}$.
\end{proof}

For the remainder of this section, we will confine ourselves to finite-dimensional Hilbert spaces, where our characterization effort is aided by the existence of a positive, invertible $T \in \mcb(\mch)$ relating the sequences $\{\phi_n\}$ and $\{\psi_n\}$. We present necessary and sufficient conditions for the existence of such a $T$ and then use this result to present a partial characterization of effective pairs in finite dimensions. 

\begin{lem}\label{lem:finite dim A} Suppose $\{\phi_n\}$ and $\{\psi_n\}$ are linearly dense sequences in a finite dimensional Hilbert space $\mch_N$. Then there exists a positive, invertible  $T \in \mcb(\mch_N)$ such that $T \phi_n = \psi_n$ if and only if $\Theta_\psi \Theta_\phi^*$ is positive.
\end{lem}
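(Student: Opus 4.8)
The plan is to prove the two implications separately, with the common device being the substitution $e_n = T^{\frac{1}{2}}\phi_n$ already used in Lemma~\ref{lem:A2}, which turns the mixed Grammian $\Theta_\psi\Theta_\phi^*$ into an honest Grammian. For the forward direction, suppose a positive invertible $T$ with $T\phi_n=\psi_n$ exists. Taking the positive square root $T^{\frac{1}{2}}$ and recalling from \eqref{defn:grammian} that $(\Theta_\psi\Theta_\phi^*)_{m,n}=\la\phi_m,\psi_n\ra$, I would compute
\[
(\Theta_\psi\Theta_\phi^*)_{m,n}=\la\phi_m,\psi_n\ra=\la\phi_m,T\phi_n\ra=\la T^{\frac{1}{2}}\phi_m,T^{\frac{1}{2}}\phi_n\ra=\la e_m,e_n\ra,
\]
so that $\Theta_\psi\Theta_\phi^*$ is exactly the Grammian of $\{e_n\}$. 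For any $k$ and any $u\in\ell^2(\N_0)$ supported in $\{0,\dots,k\}$, the quadratic form of the $k$-th principal submatrix then equals $\norm{\sum_n u_n e_n}^2\ge 0$, which is precisely the positivity of $\Theta_\psi\Theta_\phi^*$ in the sense defined in the introduction.

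For the converse, assume $\Theta_\psi\Theta_\phi^*$ is positive. The first observation is that nonnegativity of each truncated quadratic form over \emph{complex} $u$ forces every principal submatrix, and hence the whole matrix, to be Hermitian; reading off the entries yields the symmetry $\la\phi_m,\psi_n\ra=\la\psi_m,\phi_n\ra$, i.e.\ \eqref{eq:phi psi sym}. Using only this symmetry I would then define $T$ by $T\phi_n=\psi_n$, extended linearly, and check it is well defined: if $\sum_n c_n\phi_n=0$ is a finite dependence, set $v=\sum_n c_n\psi_n$; for every $m$,
\[
\la v,\phi_m\ra=\sum_n c_n\la\psi_n,\phi_m\ra=\sum_n c_n\la\phi_n,\psi_m\ra=\Big\la\sum_n c_n\phi_n,\psi_m\Big\ra=0,
\]
and since $\{\phi_m\}$ is linearly dense in $\mch_N$ we get $v=0$. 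Thus the assignment descends to a genuine linear map $T$ on $\mathrm{span}\{\phi_n\}=\mch_N$, which is automatically bounded in finite dimensions.

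It then remains to verify that $T$ is positive and invertible. Self-adjointness follows on the spanning set from $\la T\phi_m,\phi_n\ra=\la\psi_m,\phi_n\ra=\la\phi_m,\psi_n\ra=\la\phi_m,T\phi_n\ra$. For positivity, writing $x=\sum_n c_n\phi_n$, a short computation that again invokes \eqref{eq:phi psi sym} identifies $\la Tx,x\ra$ with the quadratic form of a principal truncation of $\Theta_\psi\Theta_\phi^*$ evaluated at the coefficient vector $c$, which is $\ge 0$ by hypothesis. Finally, running the orthogonality argument above with the roles of $\{\phi_n\}$ and $\{\psi_n\}$ interchanged (using that $\{\psi_m\}$ is also linearly dense) shows $\sum_n c_n\psi_n=0$ implies $\sum_n c_n\phi_n=0$; hence $Tx=0$ forces $x=0$, and injectivity on the finite-dimensional $\mch_N$ gives invertibility.

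I expect the reverse direction to be the main obstacle, since it requires manufacturing an operator out of a purely combinatorial hypothesis on truncations. The two delicate points are (a) recognizing that well-definedness of $\phi_n\mapsto\psi_n$ across \emph{all} linear dependencies is already guaranteed by Hermitian symmetry alone, and (b) matching the operator form $\la Tx,x\ra$ with the truncated matrix form of $\Theta_\psi\Theta_\phi^*$ with the conjugation and transpose conventions handled correctly, so that full principal-submatrix positivity translates into $T\ge 0$. The infinite index set causes no difficulty because every condition is tested against finitely supported coefficient sequences, exactly as the definition of positivity was set up to allow.
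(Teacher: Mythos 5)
Your proof is correct, and while the forward implication matches the paper's (the paper verifies self-adjointness of the mixed Grammian and computes the same quadratic form $\la T\sum_j c_j\phi_j,\sum_k c_k\phi_k\ra \geq 0$, just without naming $e_n = T^{\frac{1}{2}}\phi_n$), your converse takes a genuinely different route. The paper truncates to a finite spanning family $\{\phi_n\}_{n=0}^{M-1}$, $\{\psi_n\}_{n=0}^{M-1}$, uses positivity to get $\theta_\psi\theta_\phi^* = \theta_\phi\theta_\psi^*$ and hence a common range $B$ for the truncated analysis operators, and then constructs $T = \theta_\psi^*\big|_B(\theta_\phi^*\big|_B)^{-1}$ explicitly from the restricted synthesis operators, afterwards checking that the result is independent of the truncation level $M$. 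You instead extract only the Hermitian symmetry $\la\phi_m,\psi_n\ra = \la\psi_m,\phi_n\ra$ from positivity of the truncations, use it to show that the assignment $\phi_n\mapsto\psi_n$ respects every finite linear dependence (and, running the same argument with the roles of the two sequences exchanged, that $\sum_n c_n\psi_n = 0$ forces $\sum_n c_n\phi_n = 0$, which gives injectivity and hence invertibility in finite dimensions), and then read off self-adjointness and $\la Tx,x\ra\geq 0$ directly on the spanning set. Your route is more elementary: it avoids the operator-range bookkeeping and dispenses with the independence-of-$M$ verification, since well-definedness is tested against all dependencies simultaneously; it also cleanly separates which conclusions need only Hermitian symmetry (well-definedness, invertibility) from the one that needs full positivity ($T\geq 0$). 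What the paper's construction buys in exchange is an explicit closed form for $T$ in terms of the synthesis operators. The transpose/conjugation ambiguity you flag is real but harmless: the paper's Equation \eqref{defn:grammian} and its own computation $\Theta_\phi\Theta_\psi^*\delta_n = \{\la\psi_n,\phi_k\ra\}_k$ differ by a transpose, which does not affect positivity of a Hermitian matrix.
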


\begin{proof}
Suppose that there exists a positive, invertible $T \in \mcb(\mch_N)$ such that $T\phi_n = \psi_n$. First, we show that $\Theta_{\psi}\Theta_{\phi}^*$ is self-adjoint. Let $\{\delta_n\}$ be the canonical orthonormal basis of $\ell^2(\mathbb{N}_0)$. Observe
\[\Theta_{\phi}\Theta_{\psi}^*\delta_n = \{\langle \psi_n,\phi_k\rangle\}_{k=0}^{\infty} 
= \{\langle T\phi_n,\phi_k\rangle\}_{k=0}^{\infty} 
= \{\langle \phi_n,T\phi_k\rangle\}_{k=0}^{\infty} 
= \{\langle \phi_n,\psi_k\rangle\}_{k=0}^{\infty} 
= \Theta_{\psi}\Theta_{\phi}^*\delta_n.\]
From this it immediately follows that $(\Theta_{\psi}\Theta_{\phi}^*)^* = \Theta_{\phi}\Theta_{\psi}^* = \Theta_{\psi}\Theta_{\phi}^*$. Next, we show that $\Theta_{\psi}\Theta_{\phi}^*$ is positive. Observe for any finite sequence $\{c_n\}$ that 

\begin{align*}
 \left\langle \Theta_{\phi}\Theta_{\psi}^*\sum_{j=0}^{\infty}c_j\delta_j,\sum_{k=0}^{\infty}c_k\delta_k\right\rangle 
&= \sum_{j=0}^{\infty}\sum_{k=0}^{\infty}c_j\overline{c_k}\langle \Theta_{\phi}\Theta_{\psi}^*\delta_j,\delta_k\rangle
= \sum_{j=0}^{\infty}\sum_{k=0}^{\infty}c_j\overline{c_k}\langle \psi_j,\phi_k\rangle \\
&= \left\langle \sum_{j=0}^{\infty}c_j\psi_j,\sum_{k=0}^{\infty}c_k\phi_k\right\rangle 
= \left\langle T\sum_{j=0}^{\infty}c_j\phi_j,\sum_{k=0}^{\infty}c_k\phi_k\right\rangle \geq 0.
\end{align*}
Therefore $\Theta_{\phi}\Theta_{\psi}^*$, and thus $\Theta_{\psi}\Theta_{\phi}^*$, is positive.

Conversely, suppose that $\Theta_{\psi}\Theta_{\phi}^*$ is positive. As $\mch_N$ is finite dimensional, there is some $M \in \N$ such that $\{\phi_n\}_{n=0}^{M-1}$ and $\{\psi_n\}_{n=0}^{M-1}$ both span $\mch_N$. It follows that $\{\phi_n\}_{n=0}^{M-1}$ and $\{\psi_n\}_{n=0}^{M-1}$ are frames for $\mch_N$, which ensures that the operators  
\[\theta_\phi: \mch_N \to \ell^2(\Z_M), \hspace{12pt} \theta_\phi x = \{\la x, \phi_n \ra\}_{n=0}^{M-1}\]
\[\theta_\phi^*: \ell^2(\Z_M) \to \mch_N, \hspace{12pt} \theta_\phi^* \{c_n\} = \sum_{n=0}^{M-1} c_n \phi_n\]
are well defined, 
and that $\theta_\phi^*$ and $\theta_\psi^*$ (defined analogously to $\theta_\phi^*$) are surjective. As $\theta_\psi\theta_\phi^*$ is positive by assumption, we have that $\theta_\psi\theta_\phi^*= \theta_\phi\theta_\psi^*$ and thus $ran{\theta_\phi} = ran{\theta_\psi}$. Let $B = ran\theta_\phi = ran\theta_\psi$. Then $\theta_\phi^*$ and $\theta_\psi^*$ are invertible when restricted to $B$ since $B = (ker\theta_\phi^*)^\perp = (ker\theta_\psi^*)^\perp$. Let $T : \mch_N \rightarrow \mch_N$ be given by $T = \theta_{\psi}^*\big|_B(\theta_{\phi}^*\big|_B)^{-1}$. 

Let $P$ be the orthogonal projection of $\ell^2(\Z_M)$ onto the closed subspace $B$. Then the operator $\hat{\theta}_{\phi} := P\theta_{\phi}$ from $\mch_N$ onto $B$ is invertible, and we may write $T = \theta_\psi^*\big|_B(\hat{\theta}_\phi \circ\theta_\phi^*\big|_B)^{-1} \hat{\theta}_\phi$. 

Let $\{\delta_n\}$ be the canonical basis for $\ell^2(\Z_M)$. Note that $\delta_n - P\delta_n \in B^{\perp} = \ker\theta_\phi^*$. Thus $$T\phi_n = \theta_\phi^*\delta_n = \theta_\phi^*P\delta_n + \theta_\phi^*(\delta_n - P\delta_n) = \theta_\phi^*\big|_B P\delta_n$$ and similarly $\psi_n = \theta_\psi^*\big|_BP\delta_n$. We then have
\[\phi_n = T\theta_\phi^*\big|_B P\delta_n = \theta_\psi^*\big|_B(\hat{\theta}_\phi \circ\theta_\phi^*\big|_B)^{-1}(\hat{\theta}_\phi\circ\theta_\phi^*\big|_B)P\delta_n = \theta_\psi^*\big|_B P\delta_n = \psi_n.\]
Note that, for such an operator $\hat{T}$ constructed on a larger collection $\hat{M} \geq M$, $\hat{T}$ must agree with $T$ on a spanning set and, hence, $\hat{T} = T$.

Lastly, for any $x \in \mathcal{H}_N$, there is some $\{c_n\} \in B$ such that $\theta_\phi^*(\{c_n\}) = x$. Then
\[\la T^*x, x \ra = \left\la (T^*\theta_{\phi}^*)(\{c_n\}) , \theta_{\phi}^*(\{c_n\}) \right\ra = \left\la \theta_{\phi}^*(\{c_n\}), \theta_{\psi}^*(\{c_n\})\right\ra = \left\la (\theta_{\psi}\theta_{\phi}^*)(\{c_n\}), \{c_n\} \right\ra \geq 0\]
from which we conclude that $T^*$, and thus $T$, is positive.
\end{proof}

\begin{cor}\label{cor:equiv2}
If $\{\phi_n\}$ and $\{\psi_n\}$ are linearly dense sequences in a finite dimensional Hilbert space $\mch_N$ such that $\la \phi_n, \psi_n \ra = 1$ for all $n \in \N_0$ and $\Theta_\psi \Theta_\phi^*$ is positive, then the following are equivalent:
\begin{enumerate}
\item[$(i)$] $V$ is a partial isometry.
\item[$(ii)$] $\{g_n\}$ and $\{\tilde{g}_n\}$ are canonical dual frames.
\item[$(iii)$] $\{(\phi_n, \psi_n)\}$ is a symmetric effective pair.
\end{enumerate}
\end{cor}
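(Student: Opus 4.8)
The plan is to recognize Corollary~\ref{cor:equiv2} as an immediate consequence of Theorem~\ref{thm:equiv1}, once the hypothesis on the mixed Grammian is translated into the operator condition that the theorem requires. The only discrepancy between the two statements is that Theorem~\ref{thm:equiv1} postulates a positive, invertible $T \in \mcb(\mch)$ with $T\phi_n = \psi_n$, whereas the corollary instead assumes that $\Theta_\psi \Theta_\phi^*$ is positive. Lemma~\ref{lem:finite dim A} is precisely the bridge between these two conditions in the finite-dimensional setting, so the proof reduces to chaining the lemma into the theorem.

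First I would invoke Lemma~\ref{lem:finite dim A}. Because $\{\phi_n\}$ and $\{\psi_n\}$ are linearly dense in the finite-dimensional space $\mch_N$ and $\Theta_\psi \Theta_\phi^*$ is assumed positive, the lemma furnishes a positive, invertible $T \in \mcb(\mch_N)$ satisfying $T\phi_n = \psi_n$ for all $n \in \N_0$. With this $T$ in hand, every hypothesis of Theorem~\ref{thm:equiv1} is met: the sequences are linearly dense, $\la \phi_n, \psi_n \ra = 1$ for all $n$, and a positive invertible operator intertwines them. Moreover, the matrix $V$ of condition $(i)$ and the auxiliary sequences $\{g_n\}$, $\{\tilde{g}_n\}$ of condition $(ii)$ are defined by the very same Equations~\eqref{eq:N and V}, \eqref{defn:dual aux}, and \eqref{defn: dual aux2} in both statements, so no reconciliation of notation is needed. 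Theorem~\ref{thm:equiv1} therefore applies and delivers the equivalence of $(i)$, $(ii)$, and $(iii)$.

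I anticipate no genuine obstacle, since the substantive work has already been discharged in Lemma~\ref{lem:finite dim A} and Theorem~\ref{thm:equiv1}. The only point that merits a moment of attention is verifying that the finite-dimensionality hypothesis of Lemma~\ref{lem:finite dim A} is what makes the passage from positivity of $\Theta_\psi \Theta_\phi^*$ to the existence of $T$ automatic here; in infinite dimensions this step can fail, which is exactly why the corollary is stated only for $\mch_N$. Once the existence of $T$ is secured, the conclusion is a direct citation of the theorem.
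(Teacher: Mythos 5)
Your proposal is correct and follows exactly the paper's own argument: the paper likewise deduces the corollary immediately from Lemma~\ref{lem:finite dim A} (to obtain the positive invertible $T$ with $T\phi_n=\psi_n$) together with Theorem~\ref{thm:equiv1}. Your write-up simply makes explicit the hypothesis-checking that the paper leaves to the reader.
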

\begin{proof}
This is an immediate consequence of Lemma \ref{lem:finite dim A} and Theorem \ref{thm:equiv1}.
\end{proof}

\subsection{Almost effective sequences and the augmented dual Kaczmarz algorithm}

Although the Kaczmarz algorithm provides many computational advantages, the class of effective sequences is too rigid to tolerate general perturbation. Earlier in the paper, we addressed this rigidity issue by introducing effective pairs. In \cite{Cza-Tan}, Czaja and Tanis also sought to relax the concept of an effective sequence. Specifically, they defined a new class of sequences by calling $\{e_n\}$ in $\mch$ \textit{almost effective} if there exists some $0 \leq B < 1$ such that the sequence $\{x_n\}$ in Equation \eqref{defn:ka} satisfies
\begin{equation}\label{def:ae}
\lim_{n\rightarrow\infty}\|x_n - x\|^2 \leq B\|x\|^2 \quad \text{for all } x \in \mch.
\end{equation}

In \cite{Cza-Tan}, Czaja and Tanis proved that a sequence
is almost effective with bound $0 \leq (1-A)$ if and
only if its corresponding auxiliary sequence defined by \eqref{defn:aux} is a frame
with bounds $0 < A \leq 1$. This characterization provides
another succinct connection between the Kaczmarz algorithm
and frame theory.

Although almost effective sequences provide more flexibility, they are accompanied by nontrivial disadvantages. Recall the original impetus for our investigation into effective sequences---to reconstruct a vector given its inner products with some linearly dense sequence of unit vectors.  
While an effective sequence yields such a reconstruction directly via the Kaczmarz algorithm, an almost effective sequence does not necessarily retain this property. 
By combining the idea of an almost effective sequence with that of an effective pair, however, we are able to attain approximations based upon the desired inner products.  

Similar to Szwarc in \cite{Szw-all}, who showed that a certain type of Bessel sequence generates an effective sequence, we start with a lemma showing that canonical dual frames satisfying the appropriate orthogonality condition generate a symmetric effective pair. This will be an essential tool for our results involving almost effective sequences. 

\begin{lem} \label{lem: pair from gns}
Suppose that $\{g_n\}$ and $\{\tilde{g}_n\}$ are canonical dual frames in a Hilbert space $\mch$ such that
\[g_0 \perp \tilde{g}_n \quad \text{for all} \  n \in \N. \]
Then there exists a symmetric effective pair $\{(\phi_n, \psi_n)\}$ with auxiliary sequences $\{g_n\}$ and $\{\tilde{g}_n\}$, as in Equations \eqref{defn:dual aux} and \eqref{defn: dual aux2}.  
\end{lem}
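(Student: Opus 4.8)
The plan is to reduce the statement to the construction of a single effective sequence of unit vectors, and then transport it back to the pair $\{(\phi_n,\psi_n)\}$ using Lemmas \ref{lem:A2} and \ref{lem:A3} together with Corollary \ref{cor:T pair to seq}. Since $\{g_n\}$ and $\{\tilde g_n\}$ are canonical dual frames, let $S$ be the frame operator of $\{g_n\}$ and put $T=S^{-1}$, a positive invertible operator with $\tilde g_n=S^{-1}g_n=Tg_n$. Set $h_n=T^{\frac12}g_n$. Using $\sum_n|\la y,g_n\ra|^2=\la Sy,y\ra$ with $y=T^{\frac12}x$ gives
\[\sum_n|\la x,h_n\ra|^2=\la T^{-1}T^{\frac12}x,T^{\frac12}x\ra=\|x\|^2,\]
so $\{h_n\}$ is a Parseval frame. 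Moreover $\la h_0,h_n\ra=\la T^{\frac12}g_0,T^{\frac12}g_n\ra=\la g_0,\tilde g_n\ra$, so the hypothesis becomes $h_0\perp h_n$ for $n\in\N$; applying the Parseval identity to $h_0$ forces $\|h_0\|^2=\|h_0\|^4$, whence $\|h_0\|=1$ (here $g_0\ne0$, as otherwise no valid pair exists). The same identity yields $\|h_n\|\le1$ for every $n$.

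If I can produce linearly dense unit vectors $\{e_n\}$ whose auxiliary sequence \eqref{defn:aux} is exactly $\{h_n\}$, then, $\{h_n\}$ being Parseval, $\{e_n\}$ is effective by \cite{Kwa-Myc}. Setting $\phi_n=T^{-\frac12}e_n$ and $\psi_n=T^{\frac12}e_n=T\phi_n$, I have $\la\phi_n,\psi_n\ra=\|e_n\|^2=1$, both sequences are linearly dense, and Corollary \ref{cor:T pair to seq}, applied with the positive invertible operator $T^{-\frac12}$ (for which $((T^{-\frac12})^{-1})^*=T^{\frac12}$), shows that $\{(\phi_n,\psi_n)\}$ is a symmetric effective pair. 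Finally, Lemma \ref{lem:A2} identifies the auxiliary sequence of $\{\phi_n\}$ as $T^{-\frac12}h_n=g_n$, and Lemma \ref{lem:A3} identifies that of $\{(\psi_n,\phi_n)\}$ as $Tg_n=\tilde g_n$, so the pair has the prescribed auxiliary sequences.

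The whole problem therefore collapses to realizing a prescribed Parseval frame $\{h_n\}$ with $\|h_0\|=1$ as the auxiliary sequence of unit vectors, and \emph{this is the main obstacle}. I would work at the level of Gram matrices. Let $P$ be the Gram matrix of $\{h_n\}$, $P_{m,n}=\la h_m,h_n\ra$; since $\{h_n\}$ is Parseval, $P$ is a self-adjoint projection, and the orthogonality makes its $0$-th row and column vanish, so $(I-P)_{0,0}=0$. The key step is to factor the positive matrix $I-P$ as $UU^*$ with $U$ strictly lower triangular. I would build the rows $u_n$ of $U$ inductively as vectors supported on the first $n$ coordinates whose Gram matrix is $I-P$: writing $u_n=y+t\,\delta_{n-1}$ with $y\in\operatorname{span}\{u_k:k<n\}$ and $\delta_{n-1}$ orthogonal to the earlier coordinates, the cross conditions $\la u_n,u_k\ra=(I-P)_{n,k}$ reduce to a linear system solvable precisely because $((I-P)_{n,k})_{k<n}$ lies in the range of the principal submatrix (a consequence of $I-P\succeq0$), while the Schur-complement inequality $(I-P)_{n,n}\ge\|y\|^2$ for minimal-norm $y$ provides a nonnegative slack $|t|^2=(I-P)_{n,n}-\|y\|^2$; the base case $u_0=0$ is exactly where $\|h_0\|=1$ is used.

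With $U$ in hand, set $B=I+U$ (lower triangular with unit diagonal, hence algebraically invertible) and $A=B^{-1}$, and define $e_n=\sum_{k=0}^n A_{n,k}h_k$, a finite sum lying in $\mch$. The identity $B+B^*-BB^*=I-UU^*=P$ gives, for the Gram matrix $G=APA^*$ of $\{e_n\}$, that $G=A+A^*-I$; since $A$ is lower triangular with unit diagonal, this says exactly $\la e_n,e_n\ra=1$ and $\la e_n,e_k\ra=A_{n,k}$ for $k<n$. A one-line induction then shows $e_n-\sum_{k<n}\la e_n,e_k\ra h_k=A_{n,n}h_n=h_n$, so $\{h_n\}$ is the auxiliary sequence of the unit vectors $\{e_n\}$, which are linearly dense because $A$ is invertible and $\{h_n\}$ is complete. (Equivalently, $UU^*=I-P$ being a projection makes $U$ a partial isometry, so effectiveness also follows from \cite{Hall-Szw}.) The reductions above then finish the proof. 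The delicate point throughout is the strictly lower triangular factorization of $I-P$, which is available only because $I-P$ is positive semidefinite with vanishing $(0,0)$ entry—precisely the content of the orthogonality hypothesis.
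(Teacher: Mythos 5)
Your proof is correct, and its skeleton is the same as the paper's: pass to $h_n = S^{-\frac{1}{2}}g_n$ (the paper calls these $f_n$), check that they form a Parseval frame with $h_0 \perp h_n$ for $n \ge 1$, realize $\{h_n\}$ as the auxiliary sequence of an effective sequence of unit vectors $\{e_n\}$, and transport back via $\phi_n = S^{\frac{1}{2}}e_n$, $\psi_n = S^{-\frac{1}{2}}e_n$ using Corollary \ref{cor:T pair to seq} together with Lemmas \ref{lem:A2} and \ref{lem:A3}. The one genuine difference lies at the step you rightly single out as the main obstacle: the paper disposes of it in one line by citing Szwarc's Theorem~1 from \cite{Szw-all}, which asserts precisely that such a Parseval frame (with $\|h_0\|=1$, which under the Parseval identity is what the orthogonality hypothesis gives once $g_0 \neq 0$) is the auxiliary sequence of some effective sequence of unit vectors, whereas you reprove that result from scratch via the strictly lower triangular factorization $I-P = UU^*$ and the identity $G = A + A^* - I$ for the Gram matrix of $\{e_n\}$. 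Your construction is sound: the inductive Cholesky-type step works exactly because $I-P \succeq 0$ with vanishing $(0,0)$ entry, the computation showing $\|e_n\|=1$ and that $\{h_n\}$ is the auxiliary sequence of $\{e_n\}$ is correct, and linear density follows from the algebraic invertibility of $A$. What you buy is a self-contained argument (plus, as you note, a second route to effectiveness through the Haller--Szwarc partial isometry criterion, since $UU^*$ is a projection); what the paper buys by citing \cite{Szw-all} is brevity. Both arguments share the implicit assumption $g_0 \neq 0$, which you at least flag explicitly.
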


\begin{proof}
As $\{g_n\}$ and $\{\tilde{g}_n\}$ are canonical dual frames, we have that $S^{-1}g_n = \tilde{g}_n$ where $S$ is the frame operator for $\{g_n\}$. Define a sequence $\{f_n\}$ by $f_n = S^{-\frac{1}{2}} g_n$. Observe that $\{f_n\}$ is a Parseval frame and that $f_0 \perp f_n$ for all $n \in \mathbb{N}$ as
\[\la f_0, f_n \ra = \la S^{-\frac{1}{2}}g_0, S^{-\frac{1}{2}} g_n \ra = \la g_0, S^{-1} g_n \ra = \la g_0, \tilde{g}_n \ra = 0. \]

From \cite[Theorem~1]{Szw-all}, we know $\{f_n\}$ is the auxiliary sequence for some effective sequence, say $\{b_n\}$, in $\mch$.  Define the sequences $\{\phi_n\}$ and $\{\psi_n\}$ by $\phi_n = S^{\frac{1}{2}} b_n$ and $\psi_n = S^{-\frac{1}{2}}b_n$. By Theorem \ref{thm:T to pair}, Lemma \ref{lem:A2}, and Lemma \ref{lem:A3}, it follows that $\{ (\phi_n, \psi_n)\}$ is a symmetric effective pair with auxiliary sequences $\{S^{\frac{1}{2}} f_n\} = \{g_n\}$ and $\{\tilde{g}_n\}$. 
\end{proof}

Now that we have a method for generating an effective pair corresponding to certain auxiliary sequences, we use it to produce an effective pair with the same auxiliary sequence $\{h_n\}$ as a given almost effective sequence $\{e_n\}$. 

\begin{thm}\label{thm:pair from aes}
Suppose that $\{e_n\}$ is an almost effective sequence in a Hilbert space $\mch$ with auxiliary sequence $\{h_n\}$. Then there exists a symmetric effective pair $\{(\phi_n, \psi_n)\}$ with auxiliary sequences $\{g_n\}$ and $\{\tilde{g}_n\}$, as in Equations \eqref{defn:dual aux} and \eqref{defn: dual aux2}, such that 
\begin{itemize}
    \item[$(i)$] $h_n = g_n$ for all $n \in \N_0$.
    \item[$(ii)$]$\{g_n\}$ and $\{\tilde{g}_n\}$ are canonical dual frames.
\end{itemize}
Moreover, $x$ can be reconstructed from $\{\langle x, h_n \rangle\}$ by 
\begin{align*}
    x &= \sum \limits_{n=0}^{\infty} \langle x, h_n \rangle \psi_n.
\end{align*}

\end{thm}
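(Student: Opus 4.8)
The plan is to reduce the entire statement to Lemma~\ref{lem: pair from gns} by choosing $\{g_n\}$ to be the given auxiliary sequence $\{h_n\}$ itself. Since $\{e_n\}$ is almost effective, the Czaja--Tanis characterization quoted above guarantees that $\{h_n\}$ is a frame (with lower bound $A>0$). I would therefore set $g_n := h_n$, let $S$ denote the frame operator of $\{h_n\}$, and define $\tilde{g}_n := S^{-1}h_n$; by construction $\{g_n\}$ and $\{\tilde{g}_n\}$ are canonical dual frames. The only hypothesis of Lemma~\ref{lem: pair from gns} that is not immediate is the orthogonality requirement $g_0 \perp \tilde{g}_n$ for all $n \in \N$.

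To verify that hypothesis, I would first establish the purely combinatorial fact that the Kaczmarz auxiliary vector $h_0 = e_0$ is orthogonal to every later auxiliary vector, i.e.\ $\la h_0, h_n\ra = 0$ for all $n \ge 1$. This follows by induction on $n$ directly from the recursion \eqref{defn:aux}: expanding $\la e_0, h_n\ra$ and using $\norm{e_0} = 1$ together with the inductive hypothesis $\la e_0, h_k\ra = 0$ for $1 \le k \le n-1$, only the $k=0$ term of the sum survives, and it exactly cancels $\la e_0, e_n\ra$. Granting this, I would compute $S h_0 = \sum_n \la h_0, h_n\ra h_n = \norm{h_0}^2 h_0 = h_0$, so that $S^{-1}h_0 = h_0$. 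Consequently, for $n \ge 1$,
\[
\la g_0, \tilde{g}_n\ra = \la h_0, S^{-1}h_n\ra = \la S^{-1}h_0, h_n\ra = \la h_0, h_n\ra = 0,
\]
which is exactly the orthogonality condition required.

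With the hypotheses in place, Lemma~\ref{lem: pair from gns} produces a symmetric effective pair $\{(\phi_n,\psi_n)\}$ whose auxiliary sequences are precisely $\{g_n\} = \{h_n\}$ and $\{\tilde{g}_n\}$; this gives both $(i)$, namely $h_n = g_n$, and $(ii)$, that the two are canonical dual frames. Finally, because $\{(\phi_n,\psi_n)\}$ is in particular an effective pair, the reconstruction formula \eqref{pair recon} applies and yields $x = \sum_{n=0}^\infty \la x, g_n\ra \psi_n = \sum_{n=0}^\infty \la x, h_n\ra \psi_n$, as claimed. I expect the main obstacle to be the verification of the orthogonality hypothesis: the decisive and slightly non-obvious point is that the normalization $\norm{e_0} = 1$ forces $h_0$ to be a fixed vector of the frame operator $S$, which is what converts the elementary orthogonality $\la h_0, h_n\ra = 0$ into the dual-frame orthogonality $\la h_0, \tilde{g}_n\ra = 0$ that the lemma demands.
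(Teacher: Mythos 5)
Your proposal is correct and follows essentially the same route as the paper: take $g_n = h_n$, $\tilde{g}_n = S^{-1}h_n$ with $S$ the frame operator of $\{h_n\}$, observe that $h_0\perp h_n$ forces $Sh_0=h_0$ and hence the orthogonality $g_0\perp\tilde g_n$ needed for Lemma~\ref{lem: pair from gns}, and then read off the reconstruction from Equation~\eqref{pair recon}. The only divergence is the sub-step establishing $\la h_0,h_n\ra=0$ for $n\in\N$: you derive it by induction on the recursion \eqref{defn:aux} using only $\norm{e_0}=1$, whereas the paper deduces it from the Bessel bound $B\le 1$ supplied by the Czaja--Tanis theorem; both arguments are valid.
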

\begin{proof} 
As $\{e_n\}$ is almost effective, its auxiliary sequence $\{h_n\}$ is a frame with Bessel bound $0<B\leq 1$
\cite[Theorem~3.1]{Cza-Tan}. Since $\norm{h_0}^2 = \norm{e_0}^2 = 1$, it follows from the Bessel inequality that $\la h_0, h_n \ra = 0$ for all $n\in \N$.

Let $S$ be the frame operator of $\{h_n\}$. Define the canonical dual frames $\{g_n\}$ and $\{\tilde{g}_n\}$, where 
\[g_n = h_n, \qquad \tilde{g}_n = S^{-1}h_n.\]

As $h_0 \perp h_n$ for $n\in \N$, we infer that 
\[Sh_0 = \sum_{n=0}^\infty \la h_0, h_n \ra h_n = h_0.\] 
For $n \in \mathbb{N}$, we then have
\[\la g_0, \tilde{g}_n \ra = \la h_0, S^{-1}h_n \ra = \la S^{-1}h_0, h_n \ra = \la h_0, h_n \ra= 0\]
and
\[\la \tilde{g}_0, g_n \ra = \la S^{-1}h_0, h_n \ra=\la h_0, h_n \ra = 0.\]

By Lemma \ref{lem: pair from gns}, there are sequences $\{\phi_n\}$ and $\{\psi_n\}$ in $\mch$ such that $\{(\phi_n,\psi_n)\}$ is a symmetric effective pair with auxiliary sequences $\{g_n\}$ and $\{\tilde{g}_n\}$. 

Furthermore, as $h_n=g_n$, by the reconstruction in Equation \eqref{pair recon} we know that
\[x =\sum_{k=0}^\infty \langle x,g_k\rangle\psi_k =\sum_{k=0}^{\infty} \langle x,h_k\rangle\psi_k \quad \text{for all} \ x \in \mathcal{H}.\]
\end{proof}

We now have a sequence of approximations to $x$ generated by inner products with the auxiliary sequence of an almost effective sequence. In the following corollary, we introduce another variation on the Kaczmarz algorithm which will allow us to achieve reconstruction based upon the inner products with the almost effective sequence itself. 

\begin{cor}
Suppose that $\{e_n\}$ is an almost effective sequence in a Hilbert space $\mch$ with auxiliary sequence $\{h_n\}$. Let $\{\psi_n\}$ be as in the conclusion of Theorem \ref{thm:pair from aes}. For any $x \in \mch$, let $\{x_n\}$ be the sequence generated from $\{e_n\}$ as in Equation \eqref{defn:ka}. Furthermore, define the sequence $\{y_n\}$ by 
\begin{equation}\label{aug ka}
\begin{aligned}
y_0 &=\langle x,e_0\rangle\psi_0, \\
y_n &= y_{n-1} + \langle x - x_{n-1},e_n\ra \psi_n, \quad n \geq 1.
\end{aligned}
\end{equation}
Then, $\lim_{n\to \infty}||y_n-x||=0$.
\end{cor}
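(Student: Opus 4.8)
The plan is to show that the vectors $y_n$ produced by the augmented algorithm \eqref{aug ka} are exactly the partial sums of the reconstruction series furnished by Theorem \ref{thm:pair from aes}, after which convergence is immediate. The linchpin is the observation that the scalar coefficients $\la x - x_{n-1}, e_n \ra$ appearing in \eqref{aug ka} coincide with $\la x, h_n \ra$, the very coefficients in the reconstruction formula.

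First I would record the closed form of the classical Kaczmarz iterates $\{x_n\}$. Since \eqref{defn:ka} is precisely the dual Kaczmarz algorithm \eqref{defn:dual ka} in the special case $\phi_n = \psi_n = e_n$, and the auxiliary recursion \eqref{defn:dual aux} then collapses to \eqref{defn:aux} (so that $g_n = h_n$), the identity \eqref{xn approx} specializes to
\[x_{n} = \sum_{k=0}^{n} \la x, h_k \ra e_k, \qquad n \geq 0.\]

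Next I would verify the coefficient identity $\la x - x_{n-1}, e_n \ra = \la x, h_n \ra$ for $n \geq 1$, the case $n = 0$ being immediate from $h_0 = e_0$. Substituting the closed form for $x_{n-1}$, using $\overline{\la e_k, e_n \ra} = \la e_n, e_k \ra$ to pull the scalars inside the inner product, and then invoking the definition of $h_n$ in \eqref{defn:aux},
\[\la x - x_{n-1}, e_n \ra = \la x, e_n \ra - \sum_{k=0}^{n-1} \la x, h_k \ra \la e_k, e_n \ra = \left\langle x, \, e_n - \sum_{k=0}^{n-1} \la e_n, e_k \ra h_k \right\rangle = \la x, h_n \ra.\]

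Finally, feeding this identity into \eqref{aug ka} and unrolling the recursion gives $y_n = \sum_{k=0}^{n} \la x, h_k \ra \psi_k$, so that $\{y_n\}$ is exactly the sequence of partial sums of the reconstruction series. By the concluding (\emph{Moreover}) statement of Theorem \ref{thm:pair from aes}, this series converges to $x$, whence $\norm{y_n - x} \to 0$. I do not anticipate a genuine obstacle: the only step requiring care is the coefficient identity, and even that reduces to the already-established formula \eqref{xn approx} together with the defining recursion of the auxiliary sequence. The conceptual point worth emphasizing is that the augmented algorithm decouples the \emph{coefficient computation} (carried out with the almost effective sequence $\{e_n\}$, via the iterates $x_{n-1}$) from the \emph{synthesis} (carried out with $\{\psi_n\}$), and it is exactly this decoupling that converts the inner products $\{\la x, e_n \ra\}$ into a convergent reconstruction despite $\{e_n\}$ itself being only almost effective.
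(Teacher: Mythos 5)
Your proposal is correct and follows essentially the same route as the paper: both establish the coefficient identity $\la x - x_{n-1}, e_n \ra = \la x, h_n \ra$ via the closed form $x_{n-1} = \sum_{k=0}^{n-1}\la x, h_k\ra e_k$ and the defining recursion \eqref{defn:aux}, then identify $y_n$ with the partial sums $\sum_{k=0}^{n}\la x, h_k\ra\psi_k$ and invoke the reconstruction formula from Theorem \ref{thm:pair from aes}. No gaps.
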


We call the new formulation in \eqref{aug ka} the \textit{augmented dual Kaczmarz algorithm}. Note that, as $\{e_n\}$ is merely almost effective, we do not make any assumptions about the convergence of $\{x_n\}$. Indeed, even if $\lim_{n \to \infty} x_n$ exists, it need not be equal to $x$. However, we use the sequence $\{x_n\}$ as a state variable to gain the sequence of approximations $\{y_n\}$ generated in \eqref{aug ka}. 

\begin{proof}
From Theorem \ref{thm:pair from aes}, we know that 
\[x =\sum_{k=0}^{\infty} \langle x,h_k\rangle\psi_k \quad \text{for all} \ x \in \mch,\]
so it suffices to show that 
\begin{equation}\label{yn sum}
y_n  = \sum_{k=0}^n \langle x,h_k\rangle\psi_k \quad \text{for all} \ x \in \mch, n \in \N_0.
\end{equation}

This is clear for $n=0$ as $e_0=h_0$. Assume inductively that the claim holds for $0 \leq k < n$ and note that
\[\la x - x_{n-1}, e_n \ra = \la x, e_n \ra - \left\la \sum_{k=0}^{n-1} \la x, h_k \ra e_k, e_n \right\ra =  \left\la x, e_n - \sum_{k=0}^{n-1} \la e_n, e_k \ra h_k \right\ra  = \la x, h_n \ra. \]
We then have
\[ y_n =  y_{n-1} + \langle x - x_{n-1},e_n\rangle\psi_n  = \sum_{k=0}^{n-1} \langle x,h_k\rangle\psi_k + \langle x - x_{n-1},e_n\rangle\psi_n = \sum_{k=0}^{n} \langle x,h_k\rangle\psi_k \]
and thus Equation \eqref{yn sum} holds.
\end{proof}

\section{Examples}

In this section, we list examples which illuminate some of the interesting characteristics of effective pairs.
\begin{obs}
Suppose that $\{\phi_n\}$ and $\{\psi_n\}$ are effective. It is not necessarily true that $\{(\phi_n, \psi_n)\}$ is an effective pair.
\end{obs}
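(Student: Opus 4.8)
The plan is to produce two explicit effective sequences for which the dual Kaczmarz iteration \eqref{defn:dual ka} diverges, so the pair they form cannot be effective. I would work in $\mch = \mathbb{R}^2$ with orthonormal basis $\{\varepsilon_0,\varepsilon_1\}$ and take both sequences to be $2$-periodic: $\phi_{2k}=\varepsilon_0$, $\phi_{2k+1}=\varepsilon_1$, and $\psi_{2k}=-\varepsilon_0$, $\psi_{2k+1}=-\varepsilon_1$. Each is a linearly dense, periodic sequence of unit vectors in a finite-dimensional space and is therefore effective by Kaczmarz's original theorem \cite{Kpaper}; the sign is invisible to the ordinary algorithm \eqref{defn:ka}, since replacing $e_n$ by $-e_n$ leaves the update $\la x - x_{n-1}, e_n\ra e_n$ unchanged, so $\{\psi_n\}$ generates the same iterates as $\{\varepsilon_0,\varepsilon_1,\dots\}$ and is effective as well.

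To see that the pair fails, I would track the error $r_n := x - x_n$. From \eqref{defn:dual ka} one has $r_0 = x - \la x,\phi_0\ra\psi_0$ and $r_n = r_{n-1} - \la r_{n-1},\phi_n\ra\psi_n$ for $n\geq 1$, so $r_n = R_n R_{n-1}\cdots R_0\, x$ where $R_n v := v - \la v,\phi_n\ra\psi_n$. For these sequences $R_0 v = v + \la v,\varepsilon_0\ra\varepsilon_0$ and $R_1 v = v + \la v,\varepsilon_1\ra\varepsilon_1$; in coordinates $R_0(a,b)=(2a,b)$ and $R_1(a,b)=(a,2b)$, so one full period gives $R_1 R_0 = 2I$. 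An easy induction then yields $r_{2k+1} = 2^{k+1}x$, whence $\norm{x - x_{2k+1}} = 2^{k+1}\norm{x} \to \infty$ for every $x \neq 0$. In particular $\norm{x-x_n}\not\to 0$, so $\{(\phi_n,\psi_n)\}$ is not an effective pair even though each of $\{\phi_n\}$ and $\{\psi_n\}$ is effective.

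The delicate point, which I expect to be the main obstacle, is the standing normalization $\la\phi_n,\psi_n\ra=1$. Since effectiveness forces every $\phi_n$ and $\psi_n$ to be a unit vector, the equality case of Cauchy--Schwarz shows that $\la\phi_n,\psi_n\ra = 1$ can hold only when $\psi_n = \phi_n$, in which case the pair collapses to a single effective sequence and is trivially effective. Consequently no admissible pair assembled from two distinct effective sequences can exist, and the substance of the example is precisely that the iteration \eqref{defn:dual ka}, which is well defined for arbitrary sequences, can diverge when both driving sequences are effective (here $\la\phi_n,\psi_n\ra = -1$ for every $n$). I would therefore phrase the example as applying \eqref{defn:dual ka} formally and exhibiting the blow-up above, remarking that the failure is already signalled by $\la\phi_n,\psi_n\ra\neq 1$; the honest takeaway is that individual effectiveness of $\{\phi_n\}$ and $\{\psi_n\}$ cannot coexist with their forming an admissible, convergent effective pair unless the two sequences coincide.
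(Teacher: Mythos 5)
Your example is correct, and it rests on exactly the same trick as the paper's own proof: take $\psi_n = -\phi_n$ and exploit the fact that the ordinary Kaczmarz iteration \eqref{defn:ka} is invariant under sign changes of its vectors while the dual iteration \eqref{defn:dual ka} is not. The difference is in the venue and the failure mode exhibited. The paper takes $\{\phi_n\}$ to be an orthonormal basis (each vector used once, so implicitly infinite-dimensional), where the auxiliary sequence collapses to $g_n = \phi_n$ and \eqref{xn approx} gives $x_n = \sum_{k=0}^n \langle -x,\phi_k\rangle \phi_k$; the iterates then \emph{converge}, but to $-x$, so the pair is not effective. Your periodic version in $\mathbb{R}^2$ shows something slightly stronger: because the sign-flipped projections are revisited, the error operator over one period is $R_1R_0 = 2I$ and the iterates genuinely \emph{diverge}, $\norm{x - x_{2k+1}} = 2^{k+1}\norm{x} \to \infty$. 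Your error recursion $r_n = R_n\cdots R_0\,x$ and the computation of $R_0$, $R_1$ are correct, so this is a valid proof, and it buys the additional observation that the dual algorithm can blow up rather than merely settle on the wrong limit. Your closing discussion of the normalization is also apt, and it applies verbatim to the paper's own example: with $\psi_n = -\phi_n$ one has $\langle \phi_n,\psi_n\rangle = -1$, so the paper, like you, is running \eqref{defn:dual ka} formally outside the standing convention $\langle\phi_n,\psi_n\rangle = 1$; as you note via the equality case of Cauchy--Schwarz, two effective (hence unit-norm) sequences satisfying that convention must coincide, so the statement can only be read in this formal sense.
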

 The most straightforward example would be to consider an orthonormal basis $\{\phi_n\}$ and take ${\psi_n = -\phi_n}$. Clearly $\{\phi_n\}$ and $\{\psi_n\}$ are effective sequences. However, the corresponding dual Kaczmarz algorithm applied to $x$ reproduces $-x$, i.e. $\|x_n - (-x)\| \rightarrow 0$, so that $\{(\phi_n,\psi_n)\}$ is not an effective pair. This is immediate from
\[x_n = \sum_{k=0}^n \langle -x,\phi_k\rangle\phi_k\]
which follows from Equation (\ref{xn approx}).

\begin{obs}\label{obs:symmetry}
There are effective pairs $\{(\phi_n,\psi_n)\}$ satisfying $\langle \phi_n,\psi_n\rangle = 1$ for all $n \in\N_0$ which are not symmetric effective pairs.
\end{obs}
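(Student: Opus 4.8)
The plan is to produce an explicit finite-dimensional counterexample built from a periodic pair of sequences, and to reduce effectiveness to a spectral-radius condition on a single matrix per period. First I would record the error dynamics of \eqref{defn:dual ka}: writing $E_n x = x - \la x, \phi_n\ra \psi_n$, induction gives $x - x_n = E_n E_{n-1}\cdots E_0\, x$, and the normalization $\la \phi_n,\psi_n\ra = 1$ forces each $E_n$ to be idempotent. For an $L$-periodic pair, effectiveness is then equivalent to $\rho(Q) < 1$ for the one-period product $Q = E_{L-1}\cdots E_0$, since $x - x_{mL-1} = Q^m x$ and the intermediate errors are bounded by $\norm{E_j\cdots E_0}\,\norm{Q^m x}$. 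The reverse pair $\{(\psi_n,\phi_n)\}$ has error operators $E_n^{*}$, so its one-period product is $\tilde Q = E_{L-1}^{*}\cdots E_0^{*} = (E_0\cdots E_{L-1})^{*}$, whence $\rho(\tilde Q) = \rho(E_0 E_1 \cdots E_{L-1})$.

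The structural heart of the matter is that $\rho(Q)$ and $\rho(\tilde Q)$ compare the \emph{forward} and \emph{reverse} ordered products of the same factors. For $L=2$ these are cyclic permutations of one another, hence cospectral, so a period-$2$ pair can never be asymmetric; the same cyclic phenomenon is what makes biorthogonal bases symmetric. For $L \ge 3$, however, reversal is not a cyclic permutation, and the two spectra may genuinely differ. This identifies period $3$ as the first viable case, and suggests working with ordered products of at least three oblique (non-self-adjoint) projections. I would therefore specialize to $\mathbb{R}^2$ with the period-$3$ sequences generated by $\phi_0 = (1,0)$, $\phi_1 = (0,1)$, $\phi_2 = (1,1)$ and $\psi_0 = (1,1)$, $\psi_1 = (1,1)$, $\psi_2 = (1,0)$; one checks that $\la \phi_n,\psi_n\ra = 1$ for all $n$ and that both sequences span $\mathbb{R}^2$, so both are linearly dense.

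Because each $E_n$ is idempotent of trace $1$, it has rank $1$ in $\mathbb{R}^2$, so every triple product has rank at most $1$ and its spectral radius equals the modulus of its trace. A short $2\times 2$ computation then gives $E_2 E_1 E_0 = 0$, so the forward pair satisfies $x - x_n = 0$ for all $n \ge 2$ and is effective (indeed the iteration terminates exactly). By contrast $E_0 E_1 E_2 = \operatorname{diag}(0,2)$ has trace $2$, so $\rho(\tilde Q) = 2 > 1$; taking $x = (0,1)$ yields $\norm{x - \tilde x_{3m-1}} = 2^m \to \infty$, so the reverse pair fails to be effective. Hence $\{(\phi_n,\psi_n)\}$ is an effective pair that is not a symmetric effective pair.

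The main obstacle is conceptual rather than computational: the obvious candidates are forced to be symmetric, because in the period-$2$ and biorthogonal situations the two orders of reconstruction are cyclic rearrangements of each other and therefore inseparable at the level of the spectrum. Recognizing that one needs an ordered product of three non-self-adjoint projections, where reversing the order moves an eigenvalue across the unit circle, is the key idea. Once period $3$ is in hand, verifying the example reduces to the routine matrix arithmetic above, and the usual worry about convergence of the full sequence $\{x_n\}$ (rather than only the per-period subsequence) evaporates here since the forward one-period product is nilpotent.
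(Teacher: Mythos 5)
Your proposal is correct and takes essentially the same route as the paper: an explicit period-$3$ counterexample in $\mathbb{R}^2$, with the forward pair shown effective and the reversed pair shown to have errors that fail to converge; the paper simply uses a different set of six vectors and verifies the error recursion by direct induction rather than through your (equivalent, and cleanly organized) one-period product $Q=E_2E_1E_0$ and spectral-radius framing. A minor aesthetic difference is that your forward iteration terminates exactly ($E_2E_1E_0=0$) whereas the paper's forward errors decay geometrically like $2^{-k}$, but both computations check out and establish the claim.
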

In $\mathbb{R}^2$, consider the periodic sequences $\{\phi_n\}$ and $\{\psi_n\}$ with
\[\left[\begin{array}{c|c|c|c|c|c}
\phi_0 & \phi_1 & \phi_2 & \psi_0 & \psi_1 & \psi_2
\end{array}\right] = \left[\begin{array}{c|c|c|c|c|c}
1 & 1 & 0.5 & 1 & 1 & 1.5 \\
-1 & 1 & -0.5 & 0 & 0 & -0.5
\end{array}
\right]\]
and $\phi_n = \phi_{(n\mod 3)}$, $\psi_n = \psi_{(n\mod 3)}$. Consider the error sequence $\{\varepsilon_n \,|\, \varepsilon_n = x - x_n\}$ corresponding to the dual Kaczmarz algorithm for $\{(\phi_n,\psi_n)\}$ associated to $x$ where
\begin{align*}
\varepsilon_0 &= x - \langle x,\phi_0\rangle\psi_0 \\
\varepsilon_n &= \varepsilon_{n-1} - \langle \varepsilon_{n-1},\phi_n\rangle\psi_n, \quad n \geq 1.
\end{align*}
Then $\{(\phi_n,\psi_n)\}$ is an effective pair if and only if $\varepsilon_n \rightarrow 0$. It is simple to show by induction that the error sequence $\{\varepsilon_n\}$ associated to $x = (a,b)$ satisfies
\[\varepsilon_{3k} = \dfrac{b}{2^k}\begin{pmatrix}
1 \\
1
\end{pmatrix}, \quad \varepsilon_{3k+1} = \dfrac{b}{2^k}\begin{pmatrix}
-1 \\
1
\end{pmatrix}, \quad \varepsilon_{3k+2} = \dfrac{b}{2^{k+1}}\begin{pmatrix}
1 \\
1
\end{pmatrix}, \quad k \geq 0.\]
Therefore, $\{(\phi_n,\psi_n)\}$ is an effective pair. However, $\{(\psi_n,\phi_n)\}$ is not an effective pair for the following reason: Let $\{\varepsilon_n\}$ be the error sequence associated to $x = (0,4)$. Then, by induction, we find
\[\varepsilon_{3k} = \begin{pmatrix}
0 \\
4
\end{pmatrix},\quad \varepsilon_{3k+1} = \begin{pmatrix}
0 \\
4
\end{pmatrix},\quad \varepsilon_{3k+2} = \begin{pmatrix}
1 \\
3
\end{pmatrix},\quad k \geq 0. \]
The sequence $\{\varepsilon_n\}$ fails to converge.

\begin{obs}
There are symmetric effective pairs $\{( \phi_n,\psi_n)\}$ for which the mixed 
Grammian operator $\Theta_{\phi}\Theta_{\psi}^*$ is not positive. Furthermore, there are symmetric effective pairs which are not related by an invertible operator, i.e. there does not exist an invertible $T \in \mcb(\mch)$ such that $T\phi_n = \psi_n$ for all $n$.
\end{obs}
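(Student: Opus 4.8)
The plan is to give two explicit finite-dimensional examples, both built from periodic sequences so that effectiveness can be tested by a spectral-radius computation. Throughout I would work in $\mathbb{R}^2$ (or $\mathbb{R}^3$) with period $p$ and exploit that, since $\la \phi_n, \psi_n \ra = 1$, each step of the dual Kaczmarz algorithm acts on the error $\varepsilon_n = x - x_n$ by the oblique projection $I - \psi_n \phi_n^T$; hence $\varepsilon_{kp-1} = P^k x$, where $P = (I - \psi_{p-1}\phi_{p-1}^T)\cdots(I - \psi_0\phi_0^T)$, and $\{(\phi_n,\psi_n)\}$ is effective exactly when $\rho(P) < 1$. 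Writing $\tilde P$ for the analogous one-period product of the swapped projections $I - \phi_n \psi_n^T$, the pair is a \emph{symmetric} effective pair precisely when $\rho(P) < 1$ and $\rho(\tilde P) < 1$, which is the bookkeeping already used in Example~\ref{obs:symmetry}. The one structural fact I would record first is a rigidity: if any bounded $T$ satisfies $T\phi_n = \psi_n$, then it is uniquely determined because $\{\phi_n\}$ is linearly dense. Consequently, by Lemma~\ref{lem:finite dim A}, the Grammian $\Theta_\psi\Theta_\phi^*$ fails to be positive in exactly two ways — either the unique relating operator exists but is not positive, or no relating operator exists at all — and I would realize one of these with each example.

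For the first clause I would set $\psi_n = T\phi_n$ for a fixed invertible but non-self-adjoint $T$ (for instance a shear on $\mathbb{R}^2$), choosing the periodic, linearly dense sequence $\{\phi_n\}$ and scaling it so that $\la \phi_n, \psi_n \ra = \la \phi_n, T\phi_n\ra = 1$ for every $n$. Because $T \ne T^*$, one gets $\la \psi_m, \phi_n \ra = \la \phi_m, T^*\phi_n \ra \ne \la \phi_n, T^* \phi_m\ra = \la \psi_n, \phi_m \ra$ for a suitable pair of indices, so the corresponding $2 \times 2$ principal submatrix of $\Theta_\phi\Theta_\psi^*$ is not self-adjoint; evaluating its quadratic form on the complex combination $\delta_m + i\,\delta_n$ returns a value with nonzero imaginary part, which shows that the Grammian is not positive. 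It then remains to verify $\rho(P) < 1$ and $\rho(\tilde P) < 1$ to confirm that $\{(\phi_n,\psi_n)\}$ is a symmetric effective pair; note that the construction of Corollary~\ref{cor:T pair to seq} cannot be used here, since that construction always produces the \emph{positive} relating operator $(TT^*)^{-1}$.

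For the second clause I would take period-$3$ sequences in $\mathbb{R}^2$ and deliberately plant a linear relation on the analysis side that the synthesis side does not share, say $\phi_2 = \alpha \phi_0 + \beta \phi_1$ while $\psi_2 \ne \alpha \psi_0 + \beta \psi_1$. Any bounded $T$ with $T\phi_n = \psi_n$ would then force $\psi_2 = \alpha \psi_0 + \beta \psi_1$, a contradiction; hence no operator — in particular no invertible operator — relates the two sequences. By the contrapositive of Lemma~\ref{lem:finite dim A}, this example simultaneously re-establishes the first clause. Again the only remaining task is to choose $\alpha,\beta$ and the six vectors so that $\la \phi_n, \psi_n \ra = 1$, both sequences are linearly dense, and $\rho(P), \rho(\tilde P) < 1$.

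The main obstacle in both examples is reconciling the competing demands: the structure I want to destroy (self-adjointness of the relating operator, or its very existence) is exactly the structure that, via Theorem~\ref{thm:equiv1} and Proposition~\ref{prop:V implies sym ep}, tends to guarantee good behavior of the algorithm, and effectiveness of the \emph{swapped} ordering is genuinely fragile — it already fails in Example~\ref{obs:symmetry}. Concretely, I expect the work to come down to a short parameter search: parametrize the period-$3$ data in $\mathbb{R}^2$, impose the normalizations $\la \phi_n,\psi_n\ra = 1$ and the desired (non)relation, and then force both $\rho(P) < 1$ and $\rho(\tilde P) < 1$ by controlling the traces and determinants of the two $2 \times 2$ one-period products. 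A secondary point to handle carefully is the positivity convention of the paper: over a real space one must either test the quadratic form with complex vectors, as above, or arrange genuine indefiniteness of a symmetric part, rather than rely on non-symmetry alone.
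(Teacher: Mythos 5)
Your strategy---explicit periodic, finite-dimensional examples, with effectiveness read off the one-period error-propagation products $P$ and $\tilde P$---is exactly the strategy the paper uses, and your individual reductions are sound: in finite dimensions effectiveness is equivalent to $\rho(P)<1$, a planted linear relation $\phi_2=\alpha\phi_0+\beta\phi_1$ not shared by the $\psi_n$ precludes \emph{any} relating operator, and you are right that none of Proposition \ref{prop:V implies sym ep}, Theorem \ref{thm:equiv1}, or Corollary \ref{cor:T pair to seq} can be invoked to certify effectiveness, since those results presuppose the positivity you are trying to destroy. The genuine gap is that the statement is a pure existence claim, so its entire content is the exhibition of one concrete pair together with the verification that \emph{both} orderings converge---and that is precisely the step you defer to ``a short parameter search.'' Until the vectors are written down and $\rho(P)<1$, $\rho(\tilde P)<1$ are checked, nothing has been proved. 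Moreover, the real-scalar positivity issue you flag at the end is not a formality but actually bites your first construction: for a small shear $T=I+\epsilon N$ on the standard basis of $\mathbb{R}^2$ one gets a non-symmetric Grammian block whose \emph{symmetric part} is still positive definite, so over a real Hilbert space (where the paper's examples live) the quadratic form $\la T_nu,u\ra$ remains nonnegative and the example fails to witness non-positivity; one must take the shear large enough that the symmetric part becomes indefinite (e.g.\ $\epsilon>2$, which happens to leave $P$ and $\tilde P$ nilpotent), and this interaction is exactly the kind of thing the deferred verification has to resolve.

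For comparison, the paper dispatches both clauses with a single period-$3$ pair in $\mathbb{R}^2$: $\phi_0=(1,0)$, $\phi_1=(1,1)$, $\phi_2=(0,1)$ and $\psi_0=(1,0)$, $\psi_1=(1,0)$, $\psi_2=(0,1)$. No spectral radius is computed: because $\psi_2=\phi_2$ and $\psi_0=\phi_0$, the second and third steps of each period are honest orthogonal projections onto the complements of the orthonormal basis $\{\phi_2,\phi_0\}$, so the error vanishes identically after three steps in either ordering and the pair is a symmetric effective pair by inspection; the $3\times 3$ Grammian block is visibly not positive; and $\psi_0=\psi_1$ while $\phi_0\neq\phi_1$ rules out an invertible $T$. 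Arranging, as the paper does, that the tail of each period consists of orthogonal projections onto complements of an orthonormal basis (forcing $P=\tilde P=0$) is also the cleanest way to close the verification gap in your own two constructions.
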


In $\mathbb{R}^2$, consider the periodic sequences $\{\phi_n\}$ and $\{\psi_n\}$ with
\[\left[\begin{array}{c|c|c|c|c|c}
\phi_0 & \phi_1 & \phi_2 & \psi_0 & \psi_1 & \psi_2
\end{array}\right] = \left[\begin{array}{c|c|c|c|c|c}
1 & 1 & 0 & 1 & 1 & 0 \\
0 & 1 & 1 & 0 & 0 & 1
\end{array}
\right]\]
and $\phi_n = \phi_{(n\mod 3)}$, $\psi_n = \psi_{(n\mod 3)}$. As in the previous example, consider the error sequence $\{\varepsilon_n\}$ for the pair $\{(\phi_n,\psi_n)\}$. Since $\varepsilon_2$ is the projection of $\varepsilon_1$ onto the orthogonal complement of $\phi_2$ and $\varepsilon_3$ is the projection of $\varepsilon_2$ onto the orthogonal complement of $\phi_0$ and $\{\phi_0,\phi_2\}$ form an orthonormal basis, it follows that $\varepsilon_k = (0,0)$ for $k \geq 3$, so $\{(\phi_n,\psi_n)\}$ is an effective pair. Likewise, by the same argument, we observe that $\{(\psi_n,\phi_n)\}$ is an effective pair. The matrix $\Theta_{\phi}\Theta_{\psi}^*$ is not positive since its $3 \times 3$ principle submatrix,
\[\begin{pmatrix}
1 & 1 & 0 \\
1 & 1 & 1 \\
0 & 0 & 1
\end{pmatrix},\]
is not positive. Note that an invertible $T \in \mcb(\mch)$ can not possibly map $\phi_n$ to $\psi_n$ for all $n \in \N_0$ since $\psi_0 = \psi_1$ yet $\phi_0 \neq \phi_1$.

\begin{obs}\label{obs:not dual frames}
There are symmetric effective pairs $\{(\phi_n,\psi_n)\}$ where neither $\{\phi_n\}$ nor $\{\psi_n\}$ is effective. Moreover, there are symmetric effective pairs for which their auxiliary sequences do not form (dual) frames.
\end{obs}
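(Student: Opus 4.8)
The plan is to realize the pair as a normalized Schauder basis together with its biorthogonal dual, choosing the basis to be \emph{conditional}, i.e. not a Riesz basis. Concretely, I would fix a normalized conditional Schauder basis $\{\psi_n\}$ of $\mch$ (such bases exist in every infinite-dimensional separable Hilbert space) and let $\{\phi_n\}$ be its sequence of biorthogonal coefficient vectors, so that $\la \psi_m, \phi_n \ra = \delta_{m,n}$ and $x = \sum_n \la x, \phi_n \ra \psi_n$ for every $x \in \mch$. Since $\mch$ is reflexive, $\{\phi_n\}$ is itself a Schauder basis, with the dual expansion $x = \sum_n \la x, \psi_n \ra \phi_n$. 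This single family will witness both assertions.

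First I would verify that $\{(\phi_n,\psi_n)\}$ is a symmetric effective pair. Biorthogonality gives $\la \phi_n, \psi_n \ra = 1$ and, more usefully, $\la \phi_n, \psi_k \ra = \delta_{n,k}$, so every sum appearing in the recursions \eqref{defn:dual aux} and \eqref{defn: dual aux2} collapses, yielding $g_n = \phi_n$ and $\tilde g_n = \psi_n$ for all $n$. Equation \eqref{xn approx} then reads $x_n = \sum_{k=0}^n \la x, \phi_k \ra \psi_k$, which converges to $x$ by the Schauder expansion of $\{\psi_n\}$; interchanging the roles of $\phi$ and $\psi$ and invoking the dual expansion shows $\{(\psi_n,\phi_n)\}$ is effective as well. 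Hence $\{(\phi_n,\psi_n)\}$ is a symmetric effective pair whose auxiliary sequences are precisely $\{\phi_n\}$ and $\{\psi_n\}$.

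It remains to confirm the two failures. Because a Schauder basis that is also a frame is necessarily a Riesz basis, the conditional basis $\{\psi_n\}$ is not a frame, and neither is $\{\phi_n\}$ (otherwise $\{\phi_n\}$ would be a Riesz basis and so would its dual $\{\psi_n\}$); thus the auxiliary sequences $\{g_n\}=\{\phi_n\}$ and $\{\tilde g_n\}=\{\psi_n\}$ are not frames, and in particular not dual frames. For non-effectiveness I would argue by contradiction: were the normalized conditional basis $\{\psi_n\}$ effective, then by Kwapie\'{n}--Mycielski \cite{Kwa-Myc} its auxiliary sequence $\{h_n\}$ from \eqref{defn:aux} would be a Parseval frame reconstructing $x = \sum_n \la x, h_n \ra \psi_n$; comparing with the \emph{unique} basis expansion $x = \sum_n \la x, \phi_n \ra \psi_n$ forces $h_n = \phi_n$, making $\{\phi_n\}$ a Parseval frame and contradicting the previous sentence. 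The same reasoning applied to the normalization of $\{\phi_n\}$ (again a conditional basis) shows it is not effective either. The main obstacle I anticipate lies in this last step: one must ensure that the effective-sequence reconstruction is genuinely a convergent series in the basis $\{\psi_n\}$ so that uniqueness of coefficients legitimately identifies the Kaczmarz auxiliary $\{h_n\}$ with the biorthogonal system, and one must exhibit (or cite) a concrete normalized conditional basis whose dual is again conditional.
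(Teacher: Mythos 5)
Your proposal is correct and is essentially the paper's own argument: take a Schauder basis that is not a Riesz basis together with its biorthogonal dual, observe that biorthogonality collapses the recursions so the auxiliary sequences are the bases themselves, and conclude effectiveness of the pair from the Schauder expansions while the auxiliary sequences fail to be frames. You are in fact somewhat more careful than the paper, which leaves the non-effectiveness of the individual sequences implicit, whereas you supply the uniqueness-of-coefficients argument identifying the Kaczmarz auxiliary sequence with the biorthogonal system (and you correctly get $g_n=\phi_n$, $\tilde g_n=\psi_n$, where the paper's text has the two swapped).
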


Let $\{\phi_n\}$ be a Schauder basis which is not a Riesz basis, and let $\{\psi_n\}$ be its biorthogonal dual basis. We then have the reconstruction property
$$x = \sum_{n=0}^{\infty}\langle x,\phi_n\rangle \psi_n = \sum_{n=0}^{\infty}\langle x,\psi_n\rangle\phi_n.$$
Since the auxiliary sequence of $\{(\phi_n,\psi_n)\}$ is $\{\psi_n\}$ and the auxiliary sequence of $\{(\psi_n,\phi_n)\}$ is $\{\phi_n\}$, it follows that $\{(\phi_n,\psi_n)\}$ is a symmetric effective pair where the auxiliary sequences are not (dual) frames. 

Moreover, if $\{\phi_n\} \subset X$ and $\{\psi_n\} \subset X^*$ for a Banach space $X$, then these sequences form an effective pair as in definition \eqref{defn:dual ka}, while also satisfying Kwapie\'n and Mycielski's definition of an effective sequence from \eqref{defn:ka in Banach}.  

\section*{Acknowledgements}

 Anna Aboud and Eric Weber were supported in part by the National Science Foundation and the National Geospatial-Intelligence Agency under NSF award \#1832054.


\begin{thebibliography}{10}

\bibitem{Caz-Chr} Peter G. Cazassa and Ole Christensen. 
Perturbation of operators and applications to frame theory. 
\textit{J. Fourier Anal. Appl.}, 3(5):543--557, 1997. Dedicated to the memory of Richard J. Dun.

\bibitem{Chr1} Ole Christensen. 
Frame perturbations. 
\textit{Proc. Amer. Math. Soc.},
 123(4):1217--1220, 1995.
 
\bibitem{Cbook} Ole Christensen. \textit{An Introduction to Frames and Riesz Bases}, Birkh\"auser, Basel, 2003.
 
\bibitem{Chr2}Ole Christensen. 
Operators with closed range, pseudo-inverses, and perturbation of frames for a subspace. 
\textit{Canad. Math. Bull.}, 
42(1):37--45, 1999.

\bibitem{Cza-Tan}Wojciech Czaja and James H. Tanis. 
Kaczmarz algorithm and frames. 
\textit{Int. J. Wavelets Multiresolut. Inf. Process.},
11(5):1350036, 13, 2013.

\bibitem{Hall-Szw}Rainis Haller and Ryszard Szwarc. 
Kaczmarz algorithm in Hilbert space. 
\textit{Studia Math.},
 169(2):123--132, 2005.
 
\bibitem{Jeong} Halyun Jeong and C. Sinan G\"unt\"urk. 
Convergence of the randomized kaczmarz method for phase retrieval.
\textit{CoRR}, 
abs/1706.10291, 2017.

\bibitem{Kpaper}Stefan Kaczmarz. 
Approximate solution of systems of linear equations. 
\textit{Internat. J. Control}, 
57(6):1269--1271,
1993. Translated from the German.

\bibitem{Kwa-Myc}Stanis\l aw Kwapie\'n and Jan Mycielski. 
On the Kaczmarz algorithm of approximation in infinite-dimensional spaces. 
\textit{Studia Math.},
 148(1):75--86, 2001.
 
\bibitem{Nbook} Frank Natterer. 
\textit{The mathematics of computerized tomography}, 
volume 32 of \textit{Classics in Applied Mathematics}.
Society for Industrial and Applied Mathematics (SIAM), Philadelphia, PA, 2001. Reprint of the 1986 original.

\bibitem{N-all}Deanna Needell, Nathan Srebro, and Rachel Ward. 
Stochastic gradient descent, weighted sampling, and the randomized Kaczmarz algorithm. 
\emph{Math. Program.},
 155(1-2, Ser. A):549--573, 2016.
 

 \bibitem{P-W}Raymond E. A. C. Paley and Norbert Wiener. 
 \textit{Fourier transforms in the complex domain}, 
 volume 19 of \textit{American Mathematical Society Colloquium Publications}. 
 American Mathematical Society, Providence, RI, 1987. Reprint
of the 1934 original.

 
 \bibitem{Szw-all} Ryszard Szwarc. 
 Kaczmarz algorithm in Hilbert space and tight frames. 
 \textit{Appl. Comput. Harmon. Anal.},
22(3):382--385, 2007.
 
\bibitem{Tan}Yan Shuo Tan and Roman Vershynin. 
Phase retrieval via randomized kaczmarz: Theoretical guarantees. 
\textit{CoRR},
abs/1706.09993, 2017.


\end{thebibliography}


\end{document}